\documentclass[12pt]{article}
\usepackage{latexsym,amssymb,amsmath,a4wide,color,graphicx,leftindex}
\usepackage{stackengine}

\newtheorem{proposition}{Proposition}
\newtheorem{theorem}{Theorem}
\newtheorem{corollary}{Corollary}

\newcommand{\PSL}{{\rm PSL}}
\newcommand{\PGam}{{\rm P\Gamma L}}
\newcommand{\GF}{{\rm GF}}
\newcommand{\Aff}{{\rm Aff}}
\newcommand{\Aut}{{\rm Aut}}
\newcommand{\dia}{{\rm dia}}
\newcommand{\off}{{\rm off}}
\newcommand{\upp}{{\rm upp}}
\newcommand{\ord}{{\rm ord}}

\newcommand{\veps}{\varepsilon}

\newcommand{\m}{\kern 0.06em}
\newcommand{\ovl}{\overline}
\newcommand{\lcm}{{\rm lcm}}

\newcommand{\lfix}{\leftindex_{\rm fix}}

\newcommand{\is}{{\rm iso}}
\newcommand{\ho}{{\rm hom}}
\newcommand{\ep}{{\rm epi}}

\newcommand\xrowht[2][0]{\addstackgap[.5\dimexpr#2\relax]{\vphantom{#1}}}

\begin{document}

\title{\vspace{-3cm} Enumeration of orientably-regular maps \\
with automorphism group $\PGam(2,2^p)$ for prime $p$  }

\author{}
\date{}
\maketitle

\begin{center}
\vspace{-1.7cm}

{\large Yan-Quan Feng} \\
\vspace{0.8mm} {\small Beijing Jiaotong University, Beijing, P. R. China}\\
\vspace{2.7mm} {\large So\v{n}a Pavl\'ikov\'a, Jozef \v Sir\'a\v n}\\
\vspace{0.8mm} {\small Slovak University of Technology, Bratislava, Slovakia}\\
\vspace{2.7mm} {\large Chihao Wang and Jin-Xin Zhou} \\
\vspace{0.8mm} {\small Beijing Jiaotong University, Beijing, P. R. China}
\vspace{3mm}

\end{center}

\begin{abstract}
With help of group characters we enumerate orientably-regular maps of a given type with automorphism group isomorphic to $\PGam(2,2^p)$ for prime $p$.
\end{abstract}

\vskip 3mm


\section{Introduction}\label{sec:intro}

\

A {\em map} is a cellular embedding of a connected graph on an orientable surface; the embedded graph is the {\em underlying graph} of the map. We will assume that the {\em carrier surface} of the map has been given a specific orientation, making the surface and the map {\em oriented}. We will also restrict ourselves to underlying graphs in which every vertex has the same valency $n\ge 3$. The $2$-cells of the embedding are the {\em faces} of the map. Every face is bounded by a closed walk in the underlying graph. Along with assuming every vertex of valency $n$, we will also restrict ourselves to considering maps in which every face boundary walk has the same length $m\ge 3$, constituting {\em uniform} maps, of {\em type} $\{m,n\}$.
\smallskip

An {\em automorphism} of a map $M$ of type $\{m,n\}$ for $m,n\ge 3$ is an automorphism of the underlying graph of $M$ which preserves face boundaries oriented consistently with the chosen orientation of the carrier surface of $M$. The group $\Aut(M)$ of all automorphisms of $M$ acts freely (or, in an equivalent terminology, semi-regularly) on the set of arcs (that is, vertex-edge incident pairs) of the underlying graph; in the case this action is transitive (and hence regular) we say that the map $M$ is {\em orientably-regular}.
\smallskip

By general theory of maps on orientable surfaces \cite{JoSi}, an {\em orientably-regular map} $M$ of type $\{m,n\}$ may be identified with a presentation of its automorphism group $G=\Aut(M)$ generated by a pair of distinct elements $x,y$, of respective orders $m$ and $n$, with product of order $2$; in symbols,
\begin{equation}\label{eq:Aut(M)}
G = \langle x,\,y\,;\, x^m,\,y^n,\,(xy)^2,\ldots \rangle \ .
\end{equation}
The automorphisms $x$ and $y$ may be interpreted, respectively, as an $m$-fold rotation of $M$ about the centre of some fixed face $\Phi$ of $M$ and an $n$-fold rotation about some fixed vertex $v$ on the boundary walk of $\Phi$, and then $xy$ represents a two-fold rotation about the centre of an edge incident to $v$ and lying on the boundary walk of $\Phi$; the orientation of all the three rotations are assumed to be induced by the chosen orientation of the carrier surface of $M$. An orientably-regular map $M$ of type $\{m,n\}$ determined by a presentation of a group $G$ as in \eqref{eq:Aut(M)} will be denoted by ${\rm Map}(G;\,x,y)$, or simply by ${\rm Map}(x,y)$ if the group $G\cong \Aut(M)$ is clear from the context.
\smallskip

Invoking \cite{JoSi} again, or referring to a recent survey \cite{Si-surv}, two orientably-regular maps $M=(G;\,x,y)$ and $\ovl{M}=(\ovl{G};\,\ovl{x},\ovl{y})$ are isomorphic if and only if there is a group isomorphism $G\to \ovl{G}$ taking $x$ to $\ovl{x}$ and $y$ to $\ovl{y}$. In particular, isomorphic maps have the same type; the converse is, of course, not true in general.
\smallskip

The presentation \eqref{eq:Aut(M)} for the group $G=\Aut(M)$ of an orientably-regular map $M$ implies that $G$ is a quotient of the well-known $(2,m,n)$-{\em triangle group} $\Delta(2,m,n)$ with presentation \begin{equation}\label{eq:triangle} \Delta(2,m,n) = \langle X,Y\,;\, X^m,\,Y^n,\,(XY)^2\rangle \end{equation} by some torsion-free normal subgroup $K$. Parallel to this, the presentation \eqref{eq:Aut(M)} also implies that the map itself can be regarded as a quotient, by the same subgroup $K$, of a tessellation of a simply-connected surface (a sphere, an Euclidean plane or a hyperbolic plane, depending on whether $1/m + 1/n$ is larger than, equal to or smaller than $1/2$) by mutually congruent $m$-gons, $n$ of which meet at every vertex. Both the natural epimorphism $\Delta(2,m,n)\to G$ with kernel $K$, as well as (surprisingly) the epimorphism from $G$ onto the trivial group, lead to numerous deep connections between regular maps, hyperbolic geometry, complex functions encoded in terms of so-called `dessins d'enfant', and Galois theory; see e.g. \cite{JoS1} and \cite{Jones} for surveys on these topics.
\smallskip

Broadly, investigation of orientably-regular maps is approached in three main directions, depending on whether the focus is on a given underlying graph, a given carrier surface, or a given automorphism group. Here we will consider orientably-regular maps with a given automorphism group, with emphasis on their enumeration. It appears that the only non-trivial infinite families of groups $G$ for which enumeration results for orientably-regular maps $M$ with $\Aut(M)\cong G$ are available, are the linear fractional groups $\PSL(2,q)$ and ${\rm PGL} (2,q)$ for prime powers $q$. In more detail, enumeration results of orientably-regular maps $M$ of unspecified type with $\Aut(M)\cong \PSL(2,q)$ have been stated in \cite{Dow} and explicitly elaborated for $q$ a power of $2$ in \cite{DJ}, and enumeration formulae for such maps but with a specified type have been announced in \cite{Adr}, preceded by classification of orientably-regular maps $M$ of a given type with automorphism group isomorphic to $\PSL(2,q)$ or ${\rm PGL}(2,q)$ in \cite{Sah}, from which the enumeration results of \cite{Adr} and \cite{DJ} can be extracted.
\smallskip

The aim of this paper is to extend the results of \cite{Adr} and \cite{DJ} to enumeration of orientably-regular maps $M$ with $\Aut(M)$ isomorphic to $G=\PGam(2,q)$ for $q=2^p$, where $p$ is an odd prime, up to map isomorphism. Two kinds of such enumeration results will be presented: one for maps of unspecified type, obtained by Hall's enumeration scheme \cite{Hal} with help of the M\"obius function for the lattice of subgroups of $G$, and another one for maps of specified type, using the character table for $G$ and a formula due to Frobenius \cite{Fr} for counting pairs of generators $G$ of given order and contained in specified conjugacy classes of $G$.
\smallskip

The text is organised as follows. In Section \ref{sec:strat} we give details concerning the two enumeration strategies of \cite{Fr} and \cite{Hal}. Sections \ref{sec:e} and \ref{sec:p} contain useful information regarding the group $\PGam(2,2^e)$, first for arbitrary $e\ge 1$ and then for $e$ an odd prime, focusing on orders of elements and conjugacy. In Sections \ref{sec:Mob} and \ref{sec:enum} we derive the M\"obius function for the group $G=\PGam(2,2^p)$ for an odd prime $p$ and prove our first main result on enumeration of isomorphism classes of orientably-regular maps $M$ with $\Aut(M)\cong \PGam(2,2^p)$ of unspecified type. Continuing in Section \ref{sec:char} with a derivation of the character table of $G$, in Section \ref{sec:enum-spec} we prove our second main result on enumeration of isomorphism classes of orientably-regular maps $M$ {\em of specified type} with $\Aut(M)\cong \PGam(2,2^p)$. In Section \ref{sec:invariance} we discuss invariance of these maps under map operators, and conclude with remarks in the final Section \ref{sec:rem}.
\smallskip


\section{Strategies}\label{sec:strat}

\

Enumeration of orientably-regular maps with a given automorphism group but with unspecified type turns out to be a special case of enumeration of isomorphism classes of regular objects with given finite automorphism group, which  was developed as early as in 1936 by P. Hall \cite{Hal}; for a comprehensive summary of Hall's method we refer to \cite{DJ}. For orientably-regular maps $M$ with a given $G=\Aut(M)$ presented as in \eqref{eq:Aut(M)} with unspecified $m$ and $n$ it will be of advantage to regard $G$ as a quotient of a generalised version of triangle groups $\Delta(m,n)$ in which one allows both $m$ and $n$ to be equal to infinity. A presentation of such a generalisation $\Delta= \Delta(2,\infty, \infty)$ may be obtained from \eqref{eq:triangle} by leaving out the relators $X^m$ and $Y^n$, but it will be more handy to work with an equivalent presentation of this group, of the form $\Delta = \Delta(2,\infty,\infty) = \langle T,X\,;\, T^2=1\rangle$, from which it is clear that $\Delta$ is isomorphic to the free product $C_2 * C_{\infty}$ of the cyclic groups $\langle T\rangle \cong C_2$ and $\langle X\rangle \cong C_\infty$.
\smallskip

It follows that, for our specific task of enumeration of orientably-regular maps with automorphism group isomorphic to the group $G=\PGam(2,2^p)$ for prime $p\ge 5$, Hall's method assumes to perform the following steps.
\medskip

{\bf Step 1.} Determining the M\"obius function $\mu$ for the lattice of subgroups of the group $G=\PGam(2,q)$.  Values of this function at an arbitrary subgroup of $G$ are given by
\begin{equation}\label{eq:Mob} \mu(G)=1\ \ {\rm and} \ \ \sum_{J\ge K} \mu(J) = 0 \ \ {\rm for\ every\  proper\ subgroup}\ K{<}G\ \ .\end{equation}
A useful hint for calculating values of the M\"obius function of a lattice of subgroups of a group $G$ was pointed out in \cite[Theorem 2.3]{Hal}, by which $\mu(K) = 0$ whenever $K$ is {\em not} an intersection of a set of maximal subgroups of $G$. It follows that {\sl Step 1} may be restricted to subgroups $K{<}G$ which arise as intersections of maximal subgroups of $G$.
\smallskip

{\bf Step 2.} Since $\Delta = \Delta(2,\infty,\infty)$ is generated by two elements, $T$, of order $2$ and $X$ of an infinite order, every homomorphism $\Delta \to K$ is uniquely determined by assigning to $T$ an element of order $\le 2$ of $K$, and an arbitrary element of $K$ to $X$. The number $\ho(K)$ of homomorphisms from $\Delta$ to $K$ is therefore given by $\ho(K) = |\{z\in K\,;\,z^2=1\}|{\cdot}|K|$. If $\ep(K)$ denotes the number of epimorphisms $\Delta\to K$ for an arbitrary subgroup $K\le G$, one has $\ho(K) = \sum_{L\le K}\ep(L)$. This set of equations over the lattice of subgroups $L$ of $K$ for $K\le G$ can be inverted with the help of the M\"obius function $\mu_G$ from {\sl Step 1}, by which one obtains
\begin{equation}\label{eq:inv} \ep(G) = \sum_{K\le G}\mu(K){\cdot} \ho(K) \ \ .\end{equation}
The quantity $\ep(G)$ from \eqref{eq:inv} is equal to the number of {\em generating pairs} $(t,x)$ of $G$ satisfying $t^2=x^m=1$ for some $m\ge 1$. Observe that, letting $y=x^{-1}t$ (so that $t=xy$) and letting $n$ be the order of $y$, it follows that $\ep(G)$ is equal to the number of generating pairs $(x,y)$ of $G$ satisfying $x^m=y^n=(xy)^2=1$ for some $m$ and $n$ as in the presentation \eqref{eq:Aut(M)}.
\smallskip

{\bf Step 3.} Finally, the number ${\rm map}(G)$ of orientably-regular maps with automorphism group isomorphic to $G$, up to map isomorphism, is equal to the number of {\em orbits} of generating pairs $(x,y)$ for $G$ determined in {\sl Step 2}, arising by the action of the automorphism group of $G$ on such pairs. For our group $G = \PGam(2,2^p) \cong \PSL(2,2^p)\rtimes C_p$ admitting only the trivial homomorphism from $C_p$ onto the (trivial) centre of $\PSL(2,2^p)$ for $p\ge 2$, it follows from \cite[Theorem 1]{Cur} that $\Aut(G) \cong \Aut(H) \cong G$. The number ${\rm map}(G)$ is then simply given by ${\rm map}(G) = \ep(G)/|\Aut(G)|$, where $|\Aut(G)| = |G|$.
\medskip

For enumeration of orientably-regular maps with a given automorphism group but also of a given type $\{m,n\}$ we use a different strategy that begins with determining the character table of $G=\PGam(2,2^p)$. This is necessary for a subsequent application of a formula of Frobenius \cite{Fr} which, with help of characters of $G$, enables one to count pairs of generators $G$ of order $m$ and $n$, with product of order $2$, contained in preassigned conjugacy classes of $G$. The stages leading towards this kind of enumeration aim are as follows.
\medskip

{\bf Stage 1.} Deriving a character table for $G$. For this one needs to determine conjugacy classes of $G = \PGam(2,2^p)$, which turn out to have manageable structure due to primality of $p$, especially regarding those lying outside the subgroup $H=\PSL(2,2^p)$ of $G$; those contained inside $H$ are obtained by merging (in $G$) the known conjugacy classes of $\PSL(2,2^p)$ in a controllable way. Most entries in the character table for $G$ turn out to be obtainable by inducing characters from $H$ and lifting characters from $G/H$, but some need using  Clifford's theory and Brauer's characterisation of irreducible characters as integer class functions which restrict to generalised characters on so-called elementary subgroups (all to be introduced in Section \ref{sec:char}).
\smallskip

{\bf Stage 2.} By the Frobenius formula of \cite{Fr} (see also \cite{Jon} for an explanation and applications), for a finite group $G$ the number $\#({\cal A},{\cal B},{\cal C})$ of ordered triples $(x,y,z)$ of elements of $G$  satisfying $xyz=1$ and such that $x,y,z$ belong, respectively, to the conjugacy classes ${\cal A},{\cal B},{\cal C}$ in $G$, is given by
\begin{equation}\label{eq:Fr}
\#({\cal A},{\cal B},{\cal C})= \frac{|{\cal A}||{\cal B}||{\cal C}|}{|G|}\sum_{\chi} \frac{\chi({\cal A})\chi({\cal B})\chi({\cal C})}{\chi(id)} \end{equation}
where the summation ranges over irreducible characters of $G$ and $id$ is the (conjugacy class of) the identity element of $G$. For the group $G=\PGam(2,2^p)$ and orientably-regular maps $M$ of a given type $\{m,n\}$ with $\Aut(M)\cong G$ one needs to apply \eqref{eq:Fr} to all combinations of conjugacy classes ${\cal A}$, ${\cal B}$ and ${\cal C}$ of elements of order $m$, $n$ and $2$ in $G$, respectively.
\smallskip

{\bf Stage 3.} Among the pairs $(x,y)$ of elements of $G=\PGam(2,2^p)$ arising from triples $(x,y,z)$ identified in {\sl Stage 2}, that is, such that $x$ has order $m$, $y$ has order $n$ and $z=(xy)^{-1}=xy$ has order $2$, one needs to eliminate the pairs that {\em do not generate} the entire group $G$. Alternatively, using the findings from {\sl Stage 2} and having determined the number of {\em smooth} homomorphisms $\Delta(X,Y)\to G=\langle x,y\rangle$, that is, homomorphisms preserving orders of elements in the two generating pairs, one may determine the number of smooth homomorphisms $\Delta(X,Y)\to K$ for every proper subgroup $K<G$ with a non-zero value of the M\"obius function and use \eqref{eq:inv} to determine the number generating pairs of $G$ of specified orders; we will use this alternative approach in Section \ref{sec:enum-spec}. The number of orientably-regular maps $M$ of type $\{m,n\}$ with $\Aut(M)\cong G$ is then obtained by dividing the number of {\em generating pairs} $(x,y)$ of $G$ arising from {\sl Stage 2} by the order $|G|$ of the automorphism group of $G$ as in {\sl Step 3} of the strategy for enumeration of maps of unspecified type.
\medskip

Both kinds of enumeration processes described in this section require knowledge about orders of elements in the group $G=\PGam(2,2^p)$, their conjugacy, and also about conjugacy of subgroups of $G$; collecting information on these forms the contents of the forthcoming Sections \ref{sec:e} and \ref{sec:p}.
\smallskip


\section{The group ${\bf{P\Gamma L}}({\bf{2}},{\bf{2}}^{\bf{\textit e}})$: a few basic facts}\label{sec:e}

\

We begin with considering the more general case and let $q=2^e$ for an arbitrary positive integer $e$. Recall that $\PGam(2,q) = \PSL(2,q)\rtimes C_e$, a semi-direct product of $\PSL(2,q)={\rm SL}(2,q)$ by the group of automorphisms of the finite field $F=\GF(q)$ isomorphic to the cyclic group of order $e$, generated by the Frobenius automorphism $\theta:\ z\mapsto z^2$ for $z\in F$. Elements of the group $H=\PSL(2,q)$ will be identified with $2\times 2$ matrices over $F$ with determinant $1$; note that $|H|=q(q^2{-}1)$. Elements of the group $G=\PGam(2,q) \cong H\rtimes C_e$ are pairs $(A,j)$ for $A\in H$ and $j\in C_e = \{0,1,\ldots,e{-}1\}$, with multiplication given by the rule
\begin{equation}\label{eq:multip}
(A,j)(B,k) = (AB^{[j]},j{+}k)\
\end{equation}
where the exponent $[j]$ is a shorthand for $j$-fold application of the Frobenius automorphism $\theta$ to every element of the $2\times 2$ matrix $B$ over $F$, which means that $B^{[j]}$ is obtained from $B$ by raising every its entry to the power of $2^j$.
\smallskip

The group $H=\PSL(2,q)$ trivially embeds in $\PSL(2,q^2)$, but an analogue of this inclusion for $\PGam(2,q)$ is valid only for {\em odd} $e$:
\smallskip

\begin{proposition}\label{prop:embed}
The group $\PGam(2,2^e)$ embeds in $\PGam(2,2^{2e})$ if and only if $e$ is odd.
\end{proposition}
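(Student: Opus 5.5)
The plan is to treat the two directions separately: an explicit construction when $e$ is odd, and a normaliser computation inside $\PGam(2,2^{2e})$ for the converse. Throughout write $q=2^e$, $Q=q^2=2^{2e}$, $H=\PSL(2,q)$ and $\sigma$ for the Frobenius automorphism $z\mapsto z^2$ of $\GF(Q)$, which has order $2e$.

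\emph{Odd $e$.} I would write down an explicit embedding. Elements of $\PGam(2,Q)$ are pairs $(A,j)$ with $j\in\mathbb{Z}_{2e}$, multiplied by the rule \eqref{eq:multip} over $\GF(Q)$. The key observation is the following. Since $e$ is odd, $\gcd(e{+}1,2e)=2$, so $\sigma^{e+1}$ has order $e$. On $\GF(q)$ it acts as $z\mapsto z^{2^{e+1}}=z^2$, because $z^{2^e}=z$ there. Hence $\sigma^{e+1}$ restricts to $\theta$. Define
\[
(A,j)\mapsto (A,(e{+}1)j \bmod 2e).
\]
This is well defined modulo $e$, because $(e{+}1)e\equiv 0 \pmod{2e}$ when $e{+}1$ is even. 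It is a homomorphism because $B^{[(e+1)j]}=B^{[j]}$ for matrices $B$ over $\GF(q)$. It is injective because $j\mapsto (e{+}1)j$ is injective from $\mathbb{Z}_e$ onto the subgroup of order $e$ in $\mathbb{Z}_{2e}$. The case $e=1$ is trivial.

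\emph{Even $e$.} Suppose $\phi:\PGam(2,q)\to\PGam(2,Q)$ is injective.

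1. Locate the image of $H$. Since $q\ge 4$, $H$ is perfect. Because $\PGam(2,Q)/\PSL(2,Q)$ is cyclic, $\phi(H)\le\PSL(2,Q)$.

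2. Conjugate to the subfield subgroup. By Dickson's classification of subgroups of $\PSL(2,Q)$, every subgroup isomorphic to $\PSL(2,q)$ with $Q=q^2$ is conjugate in $\PSL(2,Q)$ to the subfield subgroup. So we may assume $\phi(H)=H$, embedded in the standard way.

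3. Compute the normaliser $N$ of $H$ in $\PGam(2,Q)$. Inside $\PSL(2,Q)$, $H$ is self-normalising, since $q$ is even and $\mathrm{PGL}=\PSL$. Consequently $N=H\langle\sigma\rangle$ and $N/H\cong C_{2e}$. The conjugation action gives a map $N/H\to\mathrm{Out}(H)\cong C_e$, sending $\sigma$ to $\theta$, and its kernel is $\langle\sigma^e\rangle\cong C_2$, because $\sigma^e$ centralises $H$.

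4. Compare with $\phi(G)$, where $G=\PGam(2,q)$. We have $\phi(G)\le N$, and $\phi(G)/H$ is a subgroup of order $e$ of $N/H\cong C_{2e}$. Since $G\cong\Aut(H)$, $G$ induces all of $\mathrm{Out}(H)$. Hence this order-$e$ subgroup must map onto $C_e$ under the surjection $C_{2e}\to C_e$, that is, it must meet the kernel $\langle\sigma^e\rangle$ trivially. But the unique subgroup of order $e$ in $C_{2e}$ is $\langle\sigma^2\rangle$. When $e$ is even it contains $\sigma^e$, and its image is only $\langle\theta^2\rangle$, of order $e/2$. This is a contradiction.

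\emph{Main obstacle.} The delicate points are step 2 (invoking Dickson to get conjugacy of all copies of $\PSL(2,q)$ in $\PSL(2,Q)$) and the exact description $N=H\langle\sigma\rangle$ in step 3. For step 3 I would use that $C_{\PGam(2,Q)}(H)=\langle\sigma^e\rangle$, which holds because $H$ is absolutely irreducible and $\sigma^e$ fixes $\GF(q)$ pointwise. I would also use $|N:H\,C(H)|\le|\mathrm{Out}(H)|=e$.
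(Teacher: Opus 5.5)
Your proposal is correct and follows the paper's route. For odd $e$ you use the same embedding $(A,j)\mapsto (A,(e{+}1)j \bmod 2e)$. For even $e$ you use the same normaliser argument, resting on $N/H\cong C_{2e}$, $|C(H)|=2$ and $G\cong \Aut(H)$. Your remark that the unique subgroup of order $e$ in $C_{2e}$ contains $\sigma^e$ is a rephrasing of the paper's conclusion that $N=C(H)\times G$ would force $N/H\cong C_2\times C_e$, which is not cyclic.

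Your Steps 1--2 (perfectness of $H$, and conjugacy of all copies of $\PSL(2,q)$ in $\PSL(2,q^2)$ in characteristic $2$) make explicit the reduction to the standard subfield copy of $H$, which the paper uses tacitly. Your bound $|N|\le |C(H)|\,|\Aut(H)|$ also replaces the paper's appeal to a table in the literature for $N/H\cong C_{2e}$.
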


{\bf Proof.} For odd $e$, multiplication by $2$ induces an automorphism of $C_e$. Let $(2i)_e$ and $(2i)_{2e}$ be the residue class of $2i$ (mod $e$) and (mod $2e$), respectively. Then, the assignment $\veps:\ (2i)_e\mapsto (2i)_{2e}$ for $i\in \{0,1,\ldots, e{-}1\}$ defines an embedding $C_e\to C_{2e}$. It may be checked that for every $j\in \{0,1,\ldots, e{-}1\}$ the injective homomorphism $\veps$ has the property that $\veps(j)=j$ if $j$ is even, while $\veps(j)=j{+}e$ if $j$ is odd, interpreting in both cases the argument for $\veps$ and the image of $\veps$ as a residue class (mod $e$) and (mod $2e$), respectively. It follows that $\veps$ may equally well be defined by $\veps(i) = ti$ (mod $2e$) for $t=e{+}1$.
\smallskip

Thus, for $j\in \{0,1,\ldots, e-1\}$ and $B\in \PSL(2,2^e)$, if $j$ is even one has $B^{[\veps(j)]}= B^{[j]}$, and if $j$ is odd, then $B^{[\veps(j)]}= B^{[j{+}e]}=B^{[j]}$ again, as $B^{[e]}=B$ for $B\in \PSL(2,2^e)$. If $e$ is odd, the mapping $\psi$ given by $(A,j)\mapsto (A,\veps(j))$ for every $A\in \PSL(2,2^e)$ and $j\in \{0,1, \ldots,e{-}1\}$ gives an injective homomorphism from $G=\PGam(2,2^e)$ into ${G^*}=\PGam(2,2^{2e})$. Indeed, checking that $\psi((A,j)(B,k))=\psi(A,j)\psi(B,k)$ for $j,k\in \{0,1,\ldots,e{-}1\}$ representing residue classes (mod $e$) as arguments of $\veps$, amounts to verifying that $\psi(AB^{[j]},j{+}k) = (A,\veps(j))(B,\veps(k)) = (AB^{[\veps(j)]}, \veps(j{+}k))$. But the latter boils down to the equation $B^{[j]} = B^{[\veps(j)]}$ established earlier.
\smallskip

Let us proceed by listing a few more facts. For an arbitrary $e\ge 1$ the group $H=\PSL(2,2^e)$ naturally embeds in $\PSL(2,2^{2e})$ and, moreover, $H$ is a normaliser of itself in $\PSL(2,2^{2e})$. By \cite[Table 8.1]{Bray}, the quotient group of the normaliser $N_{G^*}(H)/H$ is isomorphic to the cyclic group $C_{2e}$. Further, by  the well known Normaliser-Centraliser Theorem one concludes that $N_{G^*}(H)/C_{G^*}(H)$ embeds in $\Aut(H)\cong G$.
\smallskip

To finish by contradiction, suppose now that for some even $e\ge 2$ the group $G=\PGam(2,2^e)$ embeds in $G^* = \PGam(2,2^{2e})$. This implies that $N_{G^*}(H)=C_{G^*}(H)\times G$, and note that $C_{G^*}(H) \cap G=1$. But here $|C_{G^*}(H)|=2$, so that $G$ is a normal subgroup of $N_{G^*}(H)$ since the index of $G$ in $N_{G^*}(H)$ is $2$. It follows that $N_{G^*}(H)/H\cong C_2\times C_e$, which is not cyclic for even $e$, contrary to what has been established in \cite[Table 8.1]{Bray}.
\hfill $\Box$
\medskip

Subgroups of the group $H=\PSL(2,q)$ have been classified for an arbitrary prime power in \cite{Dic}, and as a reference we will use the comprehensive survey \cite{BDL}. For our purposes and $q=2^e$ we will only need more detailed information about three kinds of maximal subgroups of $H$ over the field $F=\GF(q)$; the affine subgroups and two kinds of dihedral subgroups.
\smallskip

First, $H$ contains $q+1$ mutually conjugate copies of an affine group ${\rm AGL}(1,q)$, of order $q(q-1)$. We will refer to a {\em standard copy} $\Aff_q<H$ of ${\rm AGL}(1,q)$, consisting of {\em upper triangular matrices} $\upp(\xi^i,b)$, in which the top off-diagonal entry $b$ ranges over elements of $F$ and the diagonal entries are powers $\xi^i$ and  $\xi^{-i}$ (from top to bottom) of a fixed primitive $(q{-}1)^{\rm st}$ root $\xi$ of unity in $F$ for $i\in \{1,2,\ldots, q{-}1\}$.
\smallskip

Second, $H$ contains $q(q+1)/2$ pairwise conjugate copies of dihedral subgroups of order $2(q{-}1)$. We will again refer to a particular copy, again called {\em standard} and denoted by $D_{2(q{-}1)}$, formed by $q{-}1$ {\em diagonal matrices} ${\rm dia}(\xi^i)$ and $q{-}1$ {\em off-diagonal matrices} ${\rm off}(\xi^i)$, in both cases with entries $\xi^i$, $\xi^{-i}$ (from top to bottom) for a fixed primitive $(q{-}1)^{\rm st}$ root $\xi$ of unity in $F$ and for $i\in \{1,2,\ldots, q{-}1\}$.
\smallskip

Third, $H$ contains $q(q-1)/2$ mutually conjugate copies of dihedral subgroups of order $2(q{+}1)$. Recalling the known approach to these subgroups (see e.g. \cite{CPS,Sah}), one may without loss of generality work with a particular isomorphic copy this group, denoted by $D_{2(q{+}1)}$, which is a subgroup of $\PSL(2,q^2)$ but {\em not} a subgroup of $H$. The group $D_{2(q{+}1)}$ is again formed by matrices of the form ${\rm dia}(\xi^i)$ and ${\rm off}(\xi^i)$ as in the previous case, but this time with $\xi$ a fixed  primitive $(q{+}1)^{\rm st}$ root of unity in the $2$-extension $\GF(q^2)$ of $F$, and with $i\in \{1,2,\ldots, q{+}1\}$.
\smallskip

Returning to the group $G=\PGam(2,q)$ for $q=2^e$, we will also need a few facts about orders of elements in $G$ lying outside the unique subgroup $H$ of $G$ isomorphic to $\PSL(2,q)$. Clearly, the order of $(A,0)\in G$ is the same as the order of $A$ in $H$. By \cite{BDL}, for the order $\ord(A)$ of an element $A\in H$ one has the following four cases:
\begin{itemize}
\setlength{\itemsep}{0pt}
\setlength{\parskip}{0pt}
\setlength{\parsep}{0pt}
\item[{\rm (a)}] $\ord(A)=1$ if and only if $A=I$, the identity matrix;
\item[{\rm (b)}] $\ord(A)=2$ if and only if $A\ne I$ and the trace ${\rm tr}(A)$ of $A$ is equal to $0$;
\item[{\rm (c)}] $\ord(A)$ is a divisor of $q{-}1$ if and only if $A$ is conjugate in $H$ to an element of the form ${\rm dia} (\xi^i)$ for some $\xi\in \GF(q)$ and $i\in\{1,2,...,q-2\}$ such that $\xi^{q{-}1}=1$;
\item[{\rm (d)}] $\ord(A)$ is a divisor of $q{+}1$ if and only if $A$ is conjugate, for some $\xi\in \GF(2^{2e})$ such that $\xi^{q{+}1}=1$, to an element of the form ${\rm dia} (\xi^i)\in \PSL(2,q^2)$, where $i\in \{1,2,...,q\}$.
\end{itemize}
As the next step, we will consider orders of elements of the form $(A,i)\in \PGam(2,q)$ for $q=2^e$ and $i\in \{1,2, \ldots,e{-}1\}$.
\smallskip

\begin{proposition}\label{prop:orders}
For $e\ge 2$ and $i\in \{1,2,\ldots,e-1\}$ let $(A,i)\in \PGam(2,2^e)$ and $r=\gcd(i,e)$. Then, the order of $(A,i)$ in $\PGam(2,2^e)$ is equal to $fe/r$, where either $f=2$, or $f$ is a divisor of $2^r\pm 1$.
\end{proposition}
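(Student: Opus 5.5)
Raising $(A,i)$ to the power $k=e/r$ lands it in $H$, and the proof reduces to knowing the possible orders of that element of $H$. Since $(A,i)^j=(A A^{[i]}A^{[2i]}\cdots A^{[(j-1)i]},\,ji)$ by repeated use of \eqref{eq:multip}, the image of $(A,i)$ in $G/H\cong C_e$ has order $e/\gcd(i,e)=e/r=k$. So $k$ divides $\ord(A,i)$, and
\[
(A,i)^{k}=(N,0),\qquad N=A\,A^{[i]}A^{[2i]}\cdots A^{[(k-1)i]}\in H .
\]
This gives $\ord(A,i)=k\cdot\ord(N)$, and it suffices to show that $\ord(N)$ is $1$, $2$, or a divisor of $2^r\pm1$. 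The case $\ord(N)=1$ is absorbed into ``a divisor of $2^r\pm1$''.

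The key step is to show that $N$ is conjugate in $H$ (or in $\PSL(2,\overline{F})$) to an element of $\PSL(2,2^r)$. Let $\sigma=\theta^i$, which generates the same group of field automorphisms as $\theta^r$; its fixed field is $\GF(2^r)$. Then $N$ satisfies the twisted relation
\[
A\,N^{[i]}A^{-1}=N ,
\]
because $A N^{[i]} = A A^{[i]}\cdots A^{[ki]} = N A^{[ki]} = NA$, using $A^{[ki]}=A$ (as $ki$ is a multiple of $e$). Hence $N^{\sigma}$ is conjugate to $N$, so the trace $t={\rm tr}(N)$ satisfies $t^{\sigma}=t$. Therefore $t\in\GF(2^r)$.

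It remains to read off the order of $N$ from its trace, using the classification (a)--(d) over characteristic $2$, where $\det N=1$.

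\begin{itemize}
\item If $t=0$, then $N$ is unipotent, so $\ord(N)\in\{1,2\}$ by (a) and (b).
\item If $t\ne0$, the characteristic polynomial $\lambda^2+t\lambda+1$ has $\GF(2^r)$-coefficients and no repeated root. Its roots $\lambda,\lambda^{-1}$ lie in $\GF(2^r)$ or in $\GF(2^{2r})$.
\begin{itemize}
\item In the first case $\lambda^{2^r-1}=1$.
\item In the second case $\lambda^{2^r}=\lambda^{-1}$, so $\lambda^{2^r+1}=1$.
\end{itemize}
Since $N$ is diagonalisable (over $\GF(q^2)$) with eigenvalues $\lambda^{\pm1}$, as in (c) and (d), $\ord(N)=\ord(\lambda)$ divides $2^r-1$ or $2^r+1$.
\end{itemize}

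Combining these gives $\ord(A,i)=fe/r$ with $f=2$ or $f\mid 2^r\pm1$.

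The main obstacle is the twisted conjugacy identity and the conclusion $t^\sigma=t$. The index bookkeeping with the exponents $[i],[2i],\dots$ must be checked carefully against the multiplication rule \eqref{eq:multip}. It also has to be confirmed that the fixed field of $\theta^i$ on $\GF(2^e)$ is indeed $\GF(2^{\gcd(i,e)})$. Everything else is the standard eigenvalue argument already encoded in items (a)--(d).
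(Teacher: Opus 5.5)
Your proof is correct and follows essentially the paper's route: the same reduction $(A,i)^{e/r}=(N,0)$ (your $N$ is the paper's $B$), the same twisted identity $N^{[i]}=A^{-1}NA$, and the same use of invariance of the trace under conjugation. The only difference is the finish. You conclude directly that ${\rm tr}(N)\in\GF(2^r)$ and read off $\ord(N)$ from $\lambda^2+t\lambda+1$ over $\GF(2^r)$, whereas the paper argues with an eigenvalue $\xi$ satisfying $\xi^{[i]}\in\{\xi,\xi^{-1}\}$ and splits into cases by $|r|_2$ versus $|e|_2$; your version avoids that case analysis.
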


{\bf Proof.}
Recalling our notation $[j]=2^j$ for powers of $2$ and also for the Frobenius automorphism $z\mapsto z^2$ applied $j$ times to field elements and matrix entries, for $m\ge 1$ one may check that a $t^{\rm th}$ power of the element $(A,i)\in \PGam(2,2^e)$ for $i\in \{1,2,\ldots,e-1\}$ has the form
\begin{equation}\label{eq:power}  (A,i)^t = (B,ti),\ \ \ {\rm where} \ \ \ B = AA^{[i]}A^{[2i]}\ldots A^{[(t-1)i]}\ . \end{equation}
Letting $t=e/r$ in \eqref{eq:power} and using $(e/r)i\equiv 0$ mod $e$, one obtains
\begin{equation}\label{eq:pow}  (A,i)^{e/r} = (B,0),\ \ \ {\rm where} \ \ \ B = AA^{[i]}A^{[2i]}\ldots A^{[(e/r-1)i]}\ . \end{equation}
An obvious consequence of \eqref{eq:pow} is that in $\PGam(2,2^e)$ the order of $(A,i)$ is an $(e/r)$-multiple of $\ord(B)$, the order of $B$ in $\PSL(2,2^e)$. In particular, if $B=I$ or of $B$ is an involution, the order of $(A,i)$ in $\PGam(2,2^e)$ is equal to $e/r$ and $2e/r$, respectively, proving our statement for $f\in \{1,2\}$.
\smallskip

To find out more, one may apply the field automorphism $z\mapsto z^{[i]}$ to $B$ in \eqref{eq:pow}. Since $A\in \PSL(2,2^e)$, one has $A^{[(e/r)i]} = A^{[e]}=A$, which gives $B^{[i]} = A^{[i]}A^{[2i]}\ldots A^{((e/r)-1)i]}A$, and hence $B^{[i]}= A^{-1}BA$. By our list of orders of elements of $\PSL(2,2^e)$, if $\ord(B) = m\ge 3$, then $m\mid 2^e\pm 1$ and $B$ is conjugate to  ${\rm dia}(\xi)$ for some primitive $m$-th root $\xi$ of unity, which is contained in $\GF(2^e)$ if $m\mid 2^e-1$, and in $\GF(2^{2e})\setminus \GF(2^e)$ if $m\mid 2^e+1$. From $B^{[i]}= A^{-1}BA$ it then follows that $B^{[i]}$ is conjugate to ${\rm dia}(\xi)$.
\smallskip

Since conjugation in $B^{[i]}=A^{-1}BA$ preserves traces, one has $\xi+\xi^{-1} = \xi^{[i]}+\xi^{-[i]}$, from which one obtains $\xi^{[i]}\in \{\xi,\, \xi^{-1}\}$. If $\xi^{[i]}=\xi$, then also $\xi^{[r]}=\xi$, so that $\xi$ lies in the subfield $\GF(2^r)$ of $\GF(2^e)$, and for the order $m$ of $\xi$ one has $m\mid 2^r-1$. But if $\xi^{[i]} =\xi^{-1}$ the situation is more complex, as we explain next.
\smallskip

If $m\mid 2^e-1$, then it is easy to show that $\xi^{[i]} =\xi^{-1}$ implies $\xi^{[r]}= \xi^{-1}$ with $|r|_2 < |e|_2$, where the symbol $|j|_2$ denotes the largest power of $2$ dividing $j$. It follows that $m\mid 2^r+1$ and $\xi$ is then contained in the subfield $\GF(2^{2r}) < \GF(2^e)$ but not in $\GF(2^r)$. In the opposite case, if $m\mid 2^e+1$, then, similarly, $\xi^{[i]} =\xi^{-1}$ implies that $\xi^{[r]}= \xi^{-1}$ with $|r|_2 = |e|_2$. Then one still has $m\mid 2^r+1$ and $\xi\in \GF(2^{2r}) \setminus \GF(2^r)$ but now $\GF(2^{2r})$ is not a subfield of $\GF(2^e)$, although obviously $\GF(2^{2r}) < \GF(2^{2e})$.
\smallskip

Summing up our findings, for every $i\in \{1,2,\ldots,e-1\}$ with $r=\gcd(i,e)$ and for any primitive $m$-th root of unity $\xi$ in $\GF(2^{2e})$ such that $m\ge 3$ and $m\mid 2^e\pm 1$ one has the following possibilities:
\smallskip

\noindent (a) if $m\mid 2^e-1$, then either $m\mid 2^r-1$, or $m\mid 2^r+1$ with $|r|_2 < |e|_2$\ ;
\smallskip

\noindent (b) if $m\mid 2^e+1$, then $m\mid 2^r+1$ with $|r|_2 = |e|_2$.
\smallskip

But in our setting, the order $m\ge 3$ of the element $\xi \in \GF(2^{2e})$ dividing $2^e\pm 1$ was equal to the order of both $B$ and ${\rm dia}(\xi)$ in $\PSL(2,2^e)$ or $\PSL(2,2^{2e})$. It follows that $m=\ord(B)$ is determined by one of (a), (b) above, and both can be summed up by stating that $m$ is a divisor of $2^r\pm 1$ distinct from $1$. In particular, one may check that $B^{[i]}=B^{[r]}$ in both cases. By \eqref{eq:pow} one then has $\ord(A,i)= \ord(A,r) = \ord(B){\cdot} (e/r)$, and this completes the proof. \hfill $\Box$
\medskip

It may be shown that all the orders listed in the statement of Proposition \ref{prop:orders} occur for every $e\ge 1$, but this is out of the scope of this article.
\smallskip


\section{The group ${\bf{P\Gamma L}}({\bf{2}},{\bf{2}}^{\bf{\textit p}})$ for an odd prime {\bf{\textit p}}}\label{sec:p}

\

To simplify the notation, in what follows we let $O=\off(1)$, and we also use $E$ to denote a conjugate of $\dia(\xi)$ for $\xi$ a primitive third root of unity, with the top left entry of $E$ being equal to $0$ and the remaining entries of $E$ being equal to $1$. We begin with an obvious consequence of Proposition \ref{prop:orders} in the situation when $e$ is an odd prime $p\ge 5$.

\begin{corollary}\label{cor:orders}
If $p\ge 5$ is an odd prime and $q=2^p$, then every element $\PGam(2,q)$ not contained in $\PSL(2,q)$ has order $p$, $2p$ or $3p$, and each of them occurs: $\ord(I,j)=p$, $\ord(O,j)=2p$ and $\ord(E,j)=3p$.
\end{corollary}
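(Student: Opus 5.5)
The plan is to specialise Proposition \ref{prop:orders} to $e=p$ and then to check the three claimed orders directly using \eqref{eq:pow}. An element of $\PGam(2,q)$ outside $\PSL(2,q)$ has the form $(A,i)$ with $i\in\{1,\ldots,p-1\}$. Since $p$ is prime, $r=\gcd(i,p)=1$, so Proposition \ref{prop:orders} shows that the order is $fp$. Here either $f=2$, or $f$ divides $2^1\pm 1$, that is, $f\in\{1,3\}$. So the only possible orders are $p$, $2p$ and $3p$.

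It remains to show that each of these occurs, via the stated witnesses, for any $j\in\{1,\ldots,p-1\}$. The key observation is that $I$, $O=\off(1)$ and $E$ all have entries in $\GF(2)$. They are therefore fixed by every power of the Frobenius automorphism, so $A^{[k]}=A$ for each of them. By \eqref{eq:pow} with $r=1$, this gives $(A,j)^p=(A^p,0)$. Moreover, for $1\le t<p$ the second coordinate $tj$ is nonzero mod $p$, so no smaller power can be the identity.

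\textbf{Case $A=I$.} Here $(I,j)^p=(I,0)$, so the order is $p$.

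\textbf{Case $A=O$.} The matrix $O$ is an involution (nonidentity with trace $0$), and $p$ is odd, so $O^p=O\ne I$. Hence $(O,j)^p=(O,0)$ has order $2$, and the order of $(O,j)$ is $2p$.

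\textbf{Case $A=E$.} The matrix $E$ has trace $1$ and determinant $0\cdot1-1\cdot1=1$ (in characteristic $2$), and it is not the identity. Its characteristic polynomial is $z^2+z+1$, so $E^2+E+I=0$. This gives $E^3=I$, and $E$ has order exactly $3$. Since $p\ge5$ is prime, $3\nmid p$, so $E^p=E^{\pm1}$ still has order $3$. Hence $\ord(E,j)=3p$.

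There is no real obstacle here. The only point needing care is making sure that $f=1$ in Proposition \ref{prop:orders} is allowed, since its proof treats $B=I$ separately. Beyond that, the argument only needs the invariance of $\GF(2)$-matrices under the field automorphism and the condition $p\ge5$, which guarantees $3\nmid p$. The oddness of $p$ handles the case of $O$.
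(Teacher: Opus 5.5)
Your proposal is correct and takes the same route as the paper. The paper treats the statement as an immediate consequence of Proposition~\ref{prop:orders} with $e=p$ and $r=1$, which forces $f\in\{1,2,3\}$. Your direct check of the three witnesses via \eqref{eq:pow} supplies the details the paper leaves implicit: since $I$, $O$, $E$ have entries in $\GF(2)$, one gets $(A,j)^p=(A^p,0)$. You also rightly observe that $p\ge 5$ (so $3\nmid p$) is what makes $(E,j)$ have order $3p$.
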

\smallskip

From now on we will assume that $q=2^p$ for some odd prime $p\ge 5$. Observe that since $p$ is not a divisor of the order $q(q^2{-}1)$ of $H=\PSL(2,q)$, from Proposition \ref{prop:orders} it also follows that every element of $G= \PGam(2,q)$ of order a multiple of $p$ lies outside $H$. We will continue with determining conjugacy classes in $G$ of elements of order divisible by $p$. This reduces to determining centralisers of such elements in the group $G$, and in our case it will turn out that it is sufficient to determine centralisers of $(I,j)$, $(O,j)$ and $(E,j)$ for $j\in \{1,2,\ldots,p-1\}$.
\smallskip

In general, the centraliser of an element $(A,j)\in G$ for $j\in \{1,2,\ldots,p{-}1\}$ consists of all elements $(U,k)\in G$, $k\in \{0,1,\ldots,p{-}1\}$, such that $(A,j)(U,k) = (U,k)(A,j)$, which is equivalent to the matrix equation
\begin{equation}\label{eq:conj}
  AU^{[j]} = UA^{[k]} \ \ {\rm for\ some} \ \ U= \left(\begin{matrix} a & b \\ c & d \\ \end{matrix}\right) \in G\ .
\end{equation}
Keeping the determinant requirement $ad+bc=1$ in mind, \eqref{eq:conj} reduces to four equations for the unknown entries $a,b,c,d\in \GF(2^e)$ of $U$. We will now go through the three possibilities for $A\in \{I,\,O,\, E\}$ which, as pairs $(A,j)$ for $j\in \{1,2,\ldots,p{-}1\}$, give elements of orders $p$, $2p$ and $3p$ in $G$, and determine their centralisers.
\smallskip

If $A=I$, the equation \eqref{eq:conj} reduces to $U^{[j]}=U$, and since the only proper subfield of $F=\GF(q)$ for $q=2^p$ is $\GF(2)$, it follows that $U$ is a non-singular $0$-$1$ matrix. There are $6$ such matrices, forming a group isomorphic to $S_3$, the symmetric group of degree $3$ and order $6$. Taking into the account the $p$ possible values of $k$ it follows that for every $j\in \{1,2,\ldots,p{-}1\}$, the centraliser in $G$ of  $(I,j)$ with $\ord(I,j)=p$ has order $6p$.
\smallskip

Taking now $A=O$, left multiplication by $A$ in \eqref{eq:conj} interchanges rows of $U^{[j]}$ while right multiplication by $A=A^{[k]}$ interchanges columns of $U$, giving equations $a^{[j]}=d$, $d^{[j]}=a$, $b^{[j]}=c$ and $c^{[j]}=b$. But then $z^{[2j]}=z$ for every $z\in \{a,b,c,d\}$, so that $U$ is again a $0$-$1$ matrix, this time with $a=d$ and $b=c$, which gives $U\in \{I,O\}$. Because of the range of values of $k$ it follows that the element $(O,j)$ of order $2p$ is centralised in $G$ by a subgroup of order $2p$.
\smallskip

For the third possibility when $A=E$, equations of \eqref{eq:conj} reduce to $c^{[j]}=b$, $b^{[j]} +d^{[j]}=c+d$, $a^{[j]} = b+d$, and $d^{[j]}=a+b$. Adding the first three gives $(a{+}b{+}c{+}d)^{[j]} = c$, adding the second to the fourth gives $a{+}b{+}c{+}d = b^{[j]}$, and adding the last two yields $(a{+}d)^{[j]} = a+d$. The two new equations for $a{+}b{+}c{+}d$ together with $c^{[j]}=b$ imply $c^{[3j]} = c$ and since $p$ is prime it follows that $c\in \{0,1\}$ and hence also $b=c\in \{0,1\}$. This reduces the second original equation to $(b{+}d)^{[j]} = b+d$, and as we have also derived that $(a{+}d)^{[j]} = a+d$, primality of $p$ implies that both $b+d$ and $a+d$ must be in $\{0,1\}$ and hence all of $a,b,c,d\in \{0,1\}$, with $b=c$. It now easily follows that $U$ is one of $I$, $E$, $E^2$, implying that for every $j\in \{1,2,\ldots,p{-}1\}$ the centraliser of the element $(E,j)$ in $G$ has order $3p$.
\smallskip

These facts constitute a large part of the proof of the following statement.

\begin{proposition}\label{prop:p-3p}
For $q=2^p$ with prime $p\ge 5$, let $G=\PGam(2,q)$ and $H=\PSL(2,q)$. For every $k\in \{1,2,3\}$, the group $G$ contains $p{-}1$ conjugacy classes of elements of order $kp$, all included in $G{\setminus}H$ and distinguished by their second coordinate $j\in \{1,2,\ldots, p{-}1\}$; each of these $p{-}1$ classes has size $|H|/6$ for $k=1$ and  $|H|/k$ for $k\in \{2,3\}$. Further, the group  $G$ contains exactly $|H|/6$, $|H|/2$ and $|H|/6$ cyclic subgroups of order $p$, $2p$ and $3p$, respectively, with each type of subgroups constituting a single conjugacy class in $G$.
\end{proposition}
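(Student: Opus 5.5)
The plan is to combine the centraliser computations just carried out with a counting argument inside each coset $H(I,j)$, $j\in\{1,\ldots,p{-}1\}$. First, conjugation preserves the second coordinate. Indeed, $G/H\cong C_p$ is abelian, so $(U,k)^{-1}(A,j)(U,k)$ again has second coordinate $j$. Hence conjugacy classes of elements outside $H$ lie entirely within a single coset $\{(A,j)\,;\,A\in H\}$, and classes in different cosets are distinct. By the remark following Corollary \ref{cor:orders}, every element of order divisible by $p$ lies outside $H$. By Corollary \ref{cor:orders} itself, every element outside $H$ has order $p$, $2p$ or $3p$, and the three representatives $(I,j)$, $(O,j)$, $(E,j)$ realise these orders.

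Next I fix $j$ and use the centraliser orders $6p$, $2p$ and $3p$ established above. The classes of $(I,j)$, $(O,j)$, $(E,j)$ then have sizes $p|H|/(6p)=|H|/6$, $p|H|/(2p)=|H|/2$ and $p|H|/(3p)=|H|/3$. These three classes are pairwise disjoint, since their elements have different orders. Their sizes sum to $|H|/6+|H|/2+|H|/3=|H|$, which is the size of the coset. Hence they exhaust the coset, so for each $k\in\{1,2,3\}$ and each $j$ there is exactly one class of elements of order $kp$ with second coordinate $j$. This gives $p{-}1$ classes for each $k$, distinguished by $j$, with the stated sizes $|H|/6$, $|H|/2$ and $|H|/3$.

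For the cyclic subgroups, I count generators. The total number of elements of order $kp$ is $(p{-}1)|H|/6$, $(p{-}1)|H|/2$ and $(p{-}1)|H|/3$ for $k=1,2,3$. A cyclic group of order $kp$ has $\varphi(kp)$ generators, namely $p{-}1$, $p{-}1$ and $2(p{-}1)$ respectively. Dividing gives $|H|/6$, $|H|/2$ and $|H|/6$ subgroups.

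For conjugacy of these subgroups, I take a generator $(A,j)$ of such a subgroup. Its powers $(A,j)^t$ with $\gcd(t,kp)=1$ have second coordinate $tj \bmod p$, and as $t$ ranges over units mod $kp$, the value $tj$ covers all nonzero residues mod $p$. So every cyclic subgroup of order $kp$ contains a generator with second coordinate $1$. All elements of order $kp$ with second coordinate $1$ form a single class, so any two such subgroups are conjugate.

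The only delicate point is the disjointness-plus-counting step, which hinges on the centraliser orders being exactly $6p$, $2p$ and $3p$. These were verified above, and I would double-check that the centraliser count includes all $p$ values of $k$ in each case. Everything else is routine arithmetic.
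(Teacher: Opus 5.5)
Your proposal is correct, and it differs from the paper's proof mainly in the part about cyclic subgroups.

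For the conjugacy classes you follow the paper: class sizes come from the centraliser orders $6p$, $2p$, $3p$, followed by a counting argument. The paper sums over all of $G\setminus H$ in \eqref{eq:sumC}, while you count inside each coset $\{(A,j)\}$. Your version has a small advantage: it makes explicit, via $G/H$ being abelian, that classes with different second coordinates are disjoint, which the paper uses tacitly.

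For the cyclic subgroups the routes differ. The paper uses Sylow's theorem for order $p$. For orders $2p$ and $3p$ it determines the normaliser of $\langle (O,j)\rangle$ and of $\langle (E,j)\rangle$; the latter needs $(E,j)^{\ell p+1}=(E^2,j)$ and the conjugacy of $E$ and $E^2$. It then checks that a single conjugacy class of subgroups already contains all elements of the given order.

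You instead note that units mod $kp$ surject onto units mod $p$, since $\gcd(k,p)=1$ for $p\ge 5$. Hence every cyclic subgroup of order $kp$ has a generator with second coordinate $1$. Conjugacy of subgroups then follows directly from the single-class result for elements, and the count $|H|/6$, $|H|/2$, $|H|/6$ is just the number of elements divided by the number of generators of each cyclic subgroup.

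What your approach buys is uniformity in $k$, with no Sylow argument and no normaliser calculations. The normaliser orders $6p$, $2p$, $6p$ that the paper computes are recovered afterwards as $|G|$ divided by the number of subgroups in the single class.
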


{\bf Proof.} For each $j\in \{1,2,\ldots,p{-}1\}$ and for $k\in \{1,2,3\}$ let $C_{j,k}$ denote, respectively, the conjugacy class of the element $(I,j)\in G$, $(O,j))\in G$ and $(E,j)\in G$, of order $kp$. By the analysis preceding this proof in which sizes of centralisers of these three elements have been determined, one has $|C_{j,1}| = |G|/(6p) = |H|/6$, $|C_{j,2}|=|H|/2$ and $|C_{j,3}|=|H|/3$. For the sum of all elements in the total of $3(p{-}1)$ conjugacy classes $C_{j,k}$ for $j\in \{1,2,\ldots, p{-}1\}$ and $k\in \{1,2,3\}$ one then has
\begin{equation}\label{eq:sumC}
\sum_{j=1}^{p{-}1}\sum_{k=1}^3 |C_{j,k}| = (p{-}1)|H|(1/6 + 1/2 + 1/3) = |G{\setminus}H| \ .
\end{equation}
Since for $p\ge 5$ the group $H$ contains no element of order a multiple of $p$, it follows from \eqref{eq:sumC} that the group $G$ contains, for each $j\in \{1,2,\ldots,p{-}1\}$, a single conjugacy class (of size $|H|/6$) of elements of order $p$, all of which are conjugate to $(I,j)\in G$, a single conjugacy class (of size $|H|/2$) of elements of order $2p$, all conjugate to $(O,j)\in G$, and a single conjugacy class (of size $|H|/3$) of elements of order $3p$, all conjugate to $(E,j)\in G$.
\smallskip

Next, we determine the number of subgroups of order $p$, $2p$ and $3p$ and their conjugacy classes, beginning with subgroups of order $p$. The Sylow $p$-subgroups of $G$ are cyclic, of order $p$, and since the $p{-}1$ conjugacy classes of elements of order $p$ in $G$, each of size $|H|/6$, comprise all elements in $G$ of order $p$, it follows that $G$ contains exactly $|H|/6$ cyclic subgroups of order $p$, forming a single conjugacy class by Sylow theorems.
\smallskip

A cyclic group of order $2p$ contains $p{-}1$ elements of order $2p$, and since there are exactly $(p{-}1)|H|/2$ elements of order $2p$ in $G$, it follows that $G$ contains exactly $H/2$ cyclic subgroups of order $2p$. By calculations preceding the statement of Proposition \ref{prop:p-3p} one sees that for $j\in \{1,2,\ldots,p{-}1\}$ the centraliser of the {\em subgroup} $\langle (O,j)\rangle \cong C_{2p}$ coincides with the centraliser of the element $(O,j)$, and has order $2p$. This means that the conjugacy class of $\langle (O,j)\rangle$ in $G$ contains exactly $|G|/(2p) = |H|/2$ subgroups isomorphic to $C_{2p}$. But each of these contains $p{-}1$ elements of order $2p$ and hence the conjugacy class of $\langle (O,j)\rangle$ contains a total of $(p{-}1)|H|/2$ elements of order $2p$, which happens to be the total number of elements of order $2p$ in $G$. From \eqref{eq:sumC} and subsequent arguments it then follows that $G$ contains a single conjugacy class of cyclic subgroups of order $2p$.
\smallskip

Finally, note that a cyclic group of order $3p$ contains $2(p{-}1)$ elements of order $3p$. For $j\in \{1,2,\ldots, p{-}1\rangle$ we saw that the centraliser of $(E,j)$ in $G$ has order $3p$. This time, however, this does not extend to the group $\langle (E,j)\rangle \cong C_{3p}$, because one may check that for $\ell\in \{1,2\}$ and $p\equiv \ell$ mod $3$ one has $(E,j)^{\ell p+1} = (E^2,j)$ and since $E$ and $E^2$ are conjugate it follows that the centraliser of the {\em subgroup} $\langle (E,j)\rangle$ has order $6p$. Thus, the conjugacy class in $G$ of the subgroup $\langle (E,j)\rangle$ contains $|G|/(6p) = |H|/6$ subgroups isomorphic to $C_{3p}$. Now, each of these subgroups contains $2p{-}2$ elements of order $3$, giving $2(p{-}1)|H|/6 = (p{-}1)|H|/3$ distinct elements of order $3p$ in $G$. But the latter is precisely the total of all elements of order $3p$ in $G$ by arguments following \eqref{eq:sumC}, which implies that $G$ contains a single conjugacy class of cyclic subgroups of order $3p$. \hfill $\Box$
\medskip

It remains to determine maximal subgroups of $G=\PGam(2,q)$ for $q=2^p$, where $p$ is an odd prime and $p\ge 5$. By \cite[Table 7]{BDL}, the maximal subgroups of $H\cong \PSL(2,q) < G$ are the $q{+}1$ conjugate copies of the affine group $\Aff_q$ and the $q(q{\mp}1)/2$ mutually conjugate copies of the dihedral groups $D_{2(q{\pm}1)}$. It follows that the maximal subgroups of $G=\PGam(2,q)$ for $q=2^p$ are:
\begin{itemize}
\setlength{\itemsep}{0pt}
\setlength{\parskip}{0pt}
\setlength{\parsep}{0pt}
\item[{\rm (a)}] the unique copy of $H$ in $G$;
\item[{\rm (b)}] the $q{+}1$ mutually conjugate copies of the group $\Aff_q\rtimes C_p$ with elements $({\rm upp}(\xi^i,a),j)$ for $\xi$ and $j$ as in the case of $\Aff_q$ and for $j\in C_p$;
\item[{\rm (c)}] the groups $D_{2(q{-}1)}\rtimes C_p$ with elements $({\rm dia}(\xi^i),j)$ and $({\rm off}(\xi^i),j)$, with $\xi$ and $j$ as for $D_{2(q{-}1)}$ and with $j\in C_p$;
\item[{\rm (d)}] the groups $D_{2(q{+}1)}\rtimes C_p$, which, by Proposition \ref{prop:embed}, may be considered to consist of elements of the form $({\rm dia}(\xi^i),\veps(j))$ and $({\rm off}(\xi^i), \veps(j))$, where $\xi$ a primitive $(q{+}1)^{\rm st}$ root of unity in $\GF(q^2)$, $i\in \{0,1,2, \ldots,q\}$, $j\in C_p$, and $\veps$ is the mapping introduced in the proof of Proposition \ref{prop:embed}.
\end{itemize}
\noindent Observe that the sizes of the conjugacy classes of the latter three groups are the same in $G$ and $H$ because of the way the semi-direct product $G\cong HC_p$ is defined by the action of the Galois group of $\GF(q)$ on $H$.
\smallskip


\section{Determining the M\"obius function for ${\bf{P\Gamma L}}({\bf{2}},{\bf{2}}^{\bf{\textit p}})$}\label{sec:Mob}

\

From now on, in the interest of saving space we will denote the semidirect products $K\rtimes C_p$ for intersections of maximal subgroups $K$ of $G$ simply by $KC_p$. As alluded to in {\sl Step 1} of Section \ref{sec:strat}, to determine the values of the M\"obius function $\mu(K)$ on subgroups $K$ of $G$ it is sufficient to focus on those that are intersections of maximal subgroups of $G$. From the lists of maximal subgroups of $G$ and $H$ given in the previous sections it follows that intersection of maximal subgroups of $G$ has the form $\cap\{K\in {\cal S}\}$, where ${\cal S}$ is a subset of subgroups contained in the set $ \{H,\,\Aff_qC_p,\,\Aff_q,\,D_{2(q\pm 1)}C_p,\,D_{2(q\pm 1)}\}$. One may check that the cyclic group $C_{q-1}$ may be obtained as $\Aff_q\cap D_{2(q-1)}$, the group $C_2$ may be obtained as an intersection $D_{2(q-1)}\cap D_{2(q+1)}$ of suitable copies of the two dihedral groups, and the trivial group $C_1$ arises as $\Aff_q\cap D_{2(q+1)} = C_{q-1}\cap C_2$. This implies that the list of all subgroups $K$ of $G$ that arise as intersections of its maximal subgroups is as in Table \ref{tab:max}; for each of them there is just one conjugacy class in $G$ and its size ${\rm conj}(K)$ is given in the second line of the table.
\smallskip

\begin{table}[ht]
\scriptsize	
\centering
\begin{tabular}{|c||c|c|c|c|c|c|c|c|c|c|c|}
\hline \xrowht[()]{8pt}
$K< G$ & $H$ & $\Aff_qC_p$ & $D_{2(q{\pm}1)}C_p$ & $\Aff_q$ & $D_{2(q{\pm}1)}$ & $C_{q{-}1}C_p$ & $C_{q{-}1}$ & $C_{2p}$ & $C_2$ & $C_p$ & $C_1$ \\ \hline \xrowht[()]{6pt}
${\rm conj}(K)$ & $1$ & $q{+}1$ & $q(q{\mp}1)/2$ & $q{+}1$ & $q(q{\mp}1)/2$ & $q(q{+}1)/2$ & $q(q{+}1)/2$ & $q(q^2{-}1)/2$ & $q^2{-}1$ & $q(q^2{-}1)/6$ & $1$   \\ \hline
\end{tabular}
\caption{Intersections $K$ of maximal subgroups of $G$ and their conjugacy class sizes.}\label{tab:max}
\end{table}

For a subgroup $J$ of $G$ let $\is(J)$ denote the number of isomorphic copies of $J$ in $G$, and for a pair of subgroups $K$ and $L$ of $G$ we let $\is(L{>}\lfix K)$ and $\is(K{<}\lfix L)$ denote, respectively, the number of isomorphic copies of $L$ in $G$ properly containing a fixed subgroup $K<G$, and the number of isomorphic copies of $K$ being contained in a fixed group $L<G$. Of particular importance for determination of the M\"obius function of the lattice of subgroups of $G$ will be the quantities $\is(L{>}\lfix K)$. To see why, for a subgroup $K$ of $G$ let ${\cal O}(K)$ denote a set of representatives of all isomorphism classes of subgroups of $G$ properly containing $K$. The recursion of \eqref{eq:Mob} is then equivalent to the formula
\begin{equation}\label{eq:mu(K)}
\mu(K) = -\sum_{L\in {\cal O}(K)} \mu(L)\cdot\is(L{>}\lfix K) \ ;
\end{equation}
note that correctness of the formula \eqref{eq:mu(K)} follows from the fact that all the subgroups isomorphic to a particular maximal subgroup of $G$ form a single conjugacy class in $G$.
\smallskip

The numbers $\is(L{>}\lfix K)$ can be determined by doubly counting the number of pairs $(K^\ast,L^\ast)$ of subgroups of $G$ such that $K^\ast\cong K$, $L^\ast\cong L$ and $K^\ast{<}L^\ast$. Indeed, the number of such pairs can be obtained either by counting all subgroups of $G$ isomorphic to $K$ and then determine in how many copies of $L$ a particular specimen of $K$ is present, or by first counting the number of copies of $L$ in $G$ and then determine how many copies of $K$ are contained in a particular specimen of $L$; this all relies on the fact that subgroups of $G$ isomorphic to a given maximal subgroup $K<G$ form a single conjugacy class in $G$. In our notation, this double counting is encapsulated by the equation
\begin{equation}\label{eq:double}
\is(K)\cdot \is(L{>}\lfix K) = \is(L)\cdot \is(K{<}\lfix L)\ .
\end{equation}

We will use \eqref{eq:double} to determine the values of $\is(L{>}\lfix K)$ in \eqref{eq:mu(K)} with help of the other three values appearing in \eqref{eq:double}, which are either known or easy to find. For example, using knowledge about maximal subgroups of $H$ one may verify that $\is(\Aff_q) = \is(\Aff_qC_p) = q{+}1$, $\is(D_{2(q{\pm}1)}) = \is(D_{2(q{\pm}1)}C_p) = q(q{\mp}1) /2$, $\is(C_{q{-}1}) = \is(C_{q{-}1}C_p) = q(q{+}1)/2$, and by Proposition \ref{prop:p-3p} one has $\is(C_{2p}) = q(q^2{-}1)/2$ and $\is(C_{p}) = q(q^2{-}1)/6$. With this preparation we are ready to determine the M\"obius function for $G$.

\begin{theorem}\label{thm:Mob}
Let $q=2^p$ for prime $p\ge 5$ and let $G=\PGam(2,q)$. The non-zero values of the M\"obius function of $G$ are given by the following table:
\begin{table}[ht]
\footnotesize	
\centering
\begin{tabular}{|c||c|c|c|c|c|c|c|c|c|c|c|}
\hline \xrowht[()]{8pt}
$K$ & $H$ & $\Aff_qC_p$ & $D_{2(q{\pm}1)}C_p$ & $\Aff_q$ & $D_{2(q{\pm}1)}$ & $C_{q{-}1}C_p$ & $C_{q{-}1}$ & $C_{2p}$ & $C_2$ & $C_p$ & $C_1$ \\ \hline \xrowht[()]{6pt}
$\mu_K$ & $-1$ & $-1$ & $-1$ & $1$ & $1$ & $2$ & $-2$ & $2$ & $-q$ & $-6$ & $q(q^2{-}1)$   \\ \hline
\end{tabular}
\caption{Values of the M\"obius function at subgroups $K$ from Table \ref{tab:max}.}\label{tab:mu}
\end{table}
\end{theorem}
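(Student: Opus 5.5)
The plan is a top-down computation of $\mu$ along the poset of Table \ref{tab:max}, using the recursion \eqref{eq:mu(K)}. By \cite[Theorem 2.3]{Hal}, $\mu$ vanishes on every other subgroup. For each $K$ in the table I determine which overgroups $L$ from the table properly contain a fixed copy of $K$, and then obtain $\is(L{>}\lfix K)$ from the double counting identity \eqref{eq:double}. That identity needs the global counts $\is(\cdot)$ listed just before the theorem, together with the internal counts $\is(K{<}\lfix L)$. The internal counts are read off from the structure of $L$: its cyclic subgroups of order $q{-}1$, its involutions, its Sylow $p$-subgroups, and so on.

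The steps proceed in order of decreasing subgroup size.

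\begin{itemize}
\item \textbf{The maximal subgroups.} We have $\mu(G)=1$, and every maximal subgroup ($H$, $\Aff_qC_p$, $D_{2(q\pm1)}C_p$) gets $\mu=-1$.

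\item \textbf{$\Aff_q$ and $D_{2(q\pm1)}$.} Each is contained in $H$ and in exactly one of its $C_p$-extensions: its normaliser, which it determines as $K C_p$ via the Frobenius action on a conjugate standard copy. So $\mu=-(1-1-1)=1$.

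\item \textbf{$C_{q-1}C_p$.} It lies in exactly one $\Aff_qC_p$ (for each of its two fixed points on the projective line, giving $2$ copies) and in one $D_{2(q-1)}C_p$. Counting via \eqref{eq:double}: $\Aff_qC_p$ contains $q$ copies of $C_{q-1}C_p$, so $(q+1)q/(q(q+1)/2)=2$. Also $G$, $H$ are not above it. This yields $\mu=-(1-2-1)=2$.

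\item \textbf{$C_{q-1}$.} Its overgroups are $H$ (once), $\Aff_qC_p$ and $\Aff_q$ (twice each), $D_{2(q-1)}C_p$ and $D_{2(q-1)}$ (once each), and $C_{q-1}C_p$ (once). The sum is $1-1-2+2-1+1+2=2$, so $\mu=-2$.

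\item \textbf{$C_{2p}$.} It is the centraliser of an involution in a $C_p$-extension. The relevant containments are in $C_p$-extended subgroups only (it is not in $H$). It lies in exactly one $\Aff_qC_p$ (the stabiliser of the fixed point of the involution) and in some number of $D_{2(q\pm1)}C_p$ and $C_{q-1}C_p$, computed by \eqref{eq:double}. Each $D_{2(q\pm1)}C_p$ contains $q\pm1$ involutions whose centraliser picks up the $C_p$ factor. I expect the sum to give $\mu=2$.

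\item \textbf{$C_p$.} I would do this similarly, using that a Sylow $p$-subgroup centralises $S_3=\PSL(2,2)$. Its overgroups among those listed are then counted through the $0$-$1$ matrix structure.

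\item \textbf{$C_2$ and $C_1$.} For $C_2$ the overgroups lie inside $H$ together with the extensions: count involutions in each $L$ and divide by $q^2-1$. The target is $\mu=-q$. Finally, $\mu(C_1)=-\sum_{L>1}\mu(L)\is(L)$, which should collapse to $q(q^2-1)$.
\end{itemize}

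The main obstacle is the correct determination of $\is(K{<}\lfix L)$ for the mixed subgroups $C_{2p}$, $C_p$ and $C_2$ inside the $C_p$-extensions. In particular I must establish how many copies of $C_{2p}$ and $C_p$ lie in $D_{2(q+1)}C_p$, where the embedding of Proposition \ref{prop:embed} must be used, and in $C_{q-1}C_p$. For these I would count elements of order $p$ and $2p$ in each extension directly: by Corollary \ref{cor:orders} these are $(A,j)$ with $AA^{[j]}\cdots=I$ or an involution. I would then divide by $p-1$, cross-checking against Proposition \ref{prop:p-3p}.

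As a safety check, I would verify the final table via $\sum_{K}\mu(K)\is(K)|K|=0$, which holds since $G$ is not cyclic. I would also check the analogous identity counting elements in a fixed subgroup.
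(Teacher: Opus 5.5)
Your plan is correct and is essentially the paper's own proof: a top-down evaluation of \eqref{eq:mu(K)} in which each $\is(L{>}\lfix K)$ comes from \eqref{eq:double}. The counts your arguments produce match the paper's: two copies of $\Aff_qC_p$ over $C_{q-1}C_p$, one copy each of $\Aff_qC_p$ and $D_{2(q\pm1)}C_p$ over $C_{2p}$, and involution counts over $C_2$. You also make explicit the $C_{q-1}$ step ($\mu=-2$), which the paper leaves implicit.

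The step you leave unfinished is $C_p$, and it is where the paper does its real work.
\begin{itemize}
\item One has $\is(L{>}\lfix C_p)=3$ for $L\in\{C_{2p},\,C_{q-1}C_p,\,D_{2(q-1)}C_p,\,\Aff_qC_p\}$.
\item For $L=D_{2(q+1)}C_p$ the count is only $1$. The Sylow $p$-subgroups of this group have their full normaliser $S_3\times C_p$ inside it, so it contains just $(q+1)/3$ of them.
\item Hence $\mu(C_p)=-(1-3-3-1+6+6)=-6$.
\end{itemize}

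Two smaller corrections. First, $C_{q-1}C_p$ has odd order, so it is not an overgroup of $C_{2p}$. Second, you must keep $C_{2p}$ among the overgroups of $C_2$: its contribution $q$ is the only one not cancelled between paired terms $L$ and $LC_p$, and it is what gives $\mu(C_2)=-q$.
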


{\bf Proof.} To begin with, observe that by \eqref{eq:Mob} the values of the M\"obius function $\mu(K)$ for intersections of maximal subgroups $K$ of $G$ from Table \ref{tab:max} are obvious for maximal subgroups of $G$, that is, for $K\in \{H,\,\Aff_qC_p,\, D_{2(q{\pm}1)}C_p\}$, with $\mu(K)=-1$, and for maximal subgroups $K\in \{\Aff_q,\,D_{2(q{\pm}1)}\}$ of $H$ (for which $\mu(K)=1$).
\smallskip

To evaluate $\mu(K)$ for $K=C_{q{-}1}C_p$ by \eqref{eq:mu(K)} one takes ${\cal O}(K) = \{\Aff_qC_p,\, D_{2(q{-}1)} C_p,\,G\}$ with the obvious values $\is(L{>}\lfix K)=1$ for $L\in \{D_{2(q{-}1)} C_p,\,G\}$. The structure of $\Aff_q\cong {\rm GF}(q)\rtimes C_{q-1}$ implies that $\Aff_q$ contains exactly $q = |{\rm GF}(q)|$ distinct copies of $C_{q-1}$, each generated by the element $(\upp(a,1)$ for $a\in {\rm GF}(q)$, so that $\is(K{<} \lfix \Aff_qC_p) =q$. Recalling that $\is(K)=\is(C_{q{-}1})= q(q{{+}}1)/2$ and $\is(\Aff_qC_p) = \is(\Aff_q)=q{+}1$, from \eqref{eq:double} one then has $\is(\Aff_q C_p {>} \lfix K)=2$. Substituting the obtained values of $\is(L{>}\lfix K)$ for $L\in {\cal O}(K)$ in the equation \eqref{eq:mu(K)} eventually gives $\mu(C_{q{-}1}C_p)=2$.
\smallskip

Continuing with evaluation of $\mu(K)$ for $K = C_2C_p\cong C_{2p}$, one may take ${\cal O}(K) = \{ \Aff_qC_p,\,
D_{2(q{\pm}1)}C_p,\,G\}$. Here, the two obvious values entering \eqref{eq:mu(K)} are $\is(L{>}\lfix K) =1$ for $L\in \{G,\,\Aff_qC_p\}$. To determine $\is(D_{2(q{\pm}1)}C_p {>}\lfix K)$ by means of \eqref{eq:double} one uses the known values of $\is(D_{2(q{\pm}1)}C_p) = \is(D_{2(q{\pm}1)})= q(q{{\mp}}1)/2$ together with $\is(K) = q(q^2{-}1)/2$. To proceed, let us represent $D_{2(q{\pm}1)}$ as the group of generated by the elements ${\rm dia}(x)$ and ${\rm off}(1)$ for $x$ a primitive $(q{\pm}1)^{\rm st}$ root of unity; this may be done without loss of generality even when $x$ is a primitive $(q{+}1)^{\rm st}$ root of unity using the well-known embedding $\PSL(2,q)\to \PSL(2,q^2)$ and the embedding $\PGam(2,q)\to \PGam(2,q^2)$ provided by Proposition \ref{prop:embed}. It may then be checked that the element $({\rm off}(x^i),j)\in G$ generates a copy of $C_{2p}$ for every $i\in \{1,2,\ldots,q{-}1\}$ and $j\in\{1,2,\ldots,p{-}1\}$. It then easily follows that $D_{2(q{\pm}1)}C_p$ contains exactly $q{\pm}1$ distinct copies of $C_{2p}$, so that $\is (K{<}\lfix D_{2(q{\pm}1)}C_p) = q{\pm}1$ for $K=C_{2p}$. Substituting these values in \eqref{eq:double} yields $\is(D_{2(q{\pm}1)}C_p {>} \lfix K) = 1$ for $K=C_{2p}$, and \eqref{eq:mu(K)} then gives $\mu(C_{2p})=2$.
\smallskip

For $K=C_2$ one may work with ${\cal O}(K) = \{C_{2p},\,D_{2(q{\pm}1)},\,\Aff_q,\, D_{2(q{\pm}1)}C_p,\, \Aff_qC_p,\, H,\, G\}$. Since $\is(C_2) =q^2{-}1$, $\is(C_{2p})= q(q^2{-}1)/2$ and $\is(C_2{<}\lfix C_{2p})= 1$, by \eqref{eq:double} one has $\is(C_{2p} {>}\lfix C_2) = q/2$. A similar calculation for $K=C_2$ and $L\in \{D_{2(q{\pm}1)},\,D_{2(q{\pm}1)}C_p\}$, with $\is(L)= q(q{\mp}1)/2$ and $\is(C_2{<}\lfix L) = q{\pm}1$ in \eqref{eq:double}, gives $\is(L {>}\lfix C_2) = q/2$. But the situation is different for $L\in \{\Aff_q, \,\Aff_qC_p\}$, with $\is(L)= q{+}1$ and $\is(C_2{<}\lfix L) = q{-}1$, giving this time $\is(L {>} \lfix C_2) = 1$ from \eqref{eq:double}. A calculation using \eqref{eq:mu(K)} with the values of $\is(L {>}\lfix C_2)$ just determined for every $L\in {\cal O}(C_2)$ and with the previously obtained values of $\mu$ then leads to the value of $\mu(C_2)=-q$.
\smallskip

To find $\mu(C_p)$ one takes ${\cal O} = \{ C_{2p},\,C_{q-1}C_p,\, D_{2(q{\pm}1)}C_p,\,\Aff_qC_p,\, G\}$. Since $\is(C_p) = q(q^2{-}1)/6$ and $\is(C_{2p}) = q(q^2{-}1)/2$ by Proposition \ref{prop:p-3p}, double counting using the obvious fact that $\is(C_p{<}\lfix C_{2p}) = 1$ yields $\is(C_{2p} {>} \lfix C_p) = 3$. Combining the last equation with $\is(\Aff_qC_p {>} \lfix C_{2p}) = 1$ established earlier, one concludes that $\is (\Aff_qC_p {>} \lfix C_{p}) = 3$. (For example, for the fixed subgroup $J=\langle (I,1)\rangle \cong C_p$ the three copies of $\Aff_qC_p$ containing $J$ are those obtained as conjugates of the standard copy of $\Aff_q$ by elements of the group $\langle E\rangle\cong C_3$.) Further, for $L\in \{C_{q-1}C_p,\, D_{2(q{-}1)}C_p \}$ it is easy to see that $\is(C_p{<}\lfix L) = q{-}1$ and then \eqref{eq:double} with the values of $\is(C_p) = q(q^2{-}1)/6$ and $\is(D_{2(q{-}1)}C_p)= q(q{+}1)/2$ one obtains $\is(L {>}\lfix C_p)= 3$. \smallskip

The situation for $L_0= D_{2(q{+}1)}C_p$ is exceptional. Our calculation of the centraliser of $(I,j)$ in the arguments preceding the statement of Proposition \ref{prop:p-3p} and also in the proof of this Proposition imply that the {\em group} $\langle(I,j)\rangle$, and hence every subgroup of $G$ isomorphic to $C_p$, has centraliser of size $6p$, isomorphic to $\langle O,E\rangle C_p$. Since the latter is a subgroup of $L_0$, from the aforementioned calculations it also follows that the centraliser of $C_p$ {\em in the subgroup} $L_0$ has order $6p$ as well, so that $\is(C_p{<}\lfix L_0) = |L_0|/(6p) = 2(q{+}1)p/(6p) = (q{+}1)/3$. This together with $\is(L_0) = q(q{-}1)/2$ and $\is(C_p) = q(q^2-1)/6$ yields $\is(L_0 {>}\lfix C_p)  = 1$, and substituting all the obtained values in \eqref{eq:mu(K)} results in $\mu(C_p) = -6$.
\smallskip

Finally, the value of $\mu(C_1)$ is simply determined from all the previously obtained values of $\mu$ by means of \eqref{eq:mu(K)} and the observation that for every subgroup $L{<}G$ the number $\is(L{>}\lfix C_1)$ is equal to the number of isomorphic (equivalently, conjugate) copies of $L$ in $G$, giving $\mu(C_1) = q(q^2{-}1)$. \hfill $\Box$
\smallskip


\section{Enumeration of orientably-regular maps \\ of unspecified type on the group ${\bf{P\Gamma L}}({\bf{2}},{\bf{2}}^{\bf{\textit p}})$}\label{sec:enum}

\

By {\sl Step 3} of Section \ref{sec:strat}, where we outlined the process of enumeration of isomorphism classes of orientably-regular maps $M$ of unspecified type with $\Aut(M) \cong G=\PGam(2,2^p)$ for prime $p\ge 5$, the number ${\rm map}(G)$ of such classes is given by
\begin{equation}\label{eq:Map(G)} {\rm map}(G) = \ep(G)/|G| \ .
\end{equation}
It thus remains to calculate the number $\ep(G)$ of epimorphisms $\Delta(2, \infty, \infty) \to G$  appearing in \eqref{eq:Map(G)}. We recall that, by {\sl Step 2} of Section \ref{sec:strat}, the quantity $\ep(G)$ is, at the same time, equal to the number of generating pairs $(x,y)$ of $G$ such that $(xy)^2=1$, and is given by means of values $\mu(K)$ of the M\"obius function on subgroups $K<G$ by the formula
\begin{equation}\label{eq:Epi(G)}
\ep(G) = \sum_{K\le G}\mu(K)\cdot \ho(K), \ \ {\rm where} \ \ \ho(K) = |\{z\in K\,;\,z^2=1\}|\cdot |K|\ .
\end{equation}
If ${\cal O}$ is the set of subgroups of $G$ of the form $LC_p$ for $L\in \{H,\,\Aff_q,\,D_{2(q{\pm}1)},\, C_{q{-}1},\,C_2,\,C_1\}$, that is, the set of intersections of maximal subgroups of $G$ including $G$ itself, then the formula for $\ep(G)$ in \eqref{eq:Epi(G)} may further be written in the form
\begin{equation}\label{eq:EG}
\ep(G) = \sum_{K\in {\cal O}}\mu(K)\cdot \is(K)\cdot\ho(K) \ .
\end{equation}
To calculate the values of $\ho(K)$ in \eqref{eq:EG} it suffices to determine the number of involutions {\em together} with the identity element in the groups $K\in {\cal O}$. This gives $\ho(K)=|K|$ if $|K|$ is odd, and  $\ho(LC_p)=\ho(L)\cdot p$ for $L$ as above, with $\ho(H)=q^2|H|$, $\ho(\Aff_q) = q|\Aff_q|$, $\ho(D_{2(q{+}1)}) =2(q{+}2)(q{+}1)$, $\ho(D_{2(q{-}1)})=2q(q{-}1)$ and $\ho(C_2)=4$. The remaining parameters in \eqref{eq:EG} depending on subgroups $K\in {\cal O}$ have been determined in Section \ref{sec:Mob}. For clarity, the subgroups $K<G$ and the corresponding values of $\ho(K)$, $\is(K)$ and $\mu(K)$ entering the formula in \eqref{eq:EG} are listed in Table \ref{tab:X}.
\begin{table}[ht]
\footnotesize
\centering
\begin{tabular}{|c||c|c|c|c|c|c|}
\hline \xrowht[()]{8pt}
$K$      & $G$       & $H$      & $\Aff_qC_p$   & $\Aff_q$     & $D_{2(q{+}1)}C_p$  & $D_{2(q{+}1)}$  \\ \hline \xrowht[()]{6pt}
$\ho(K)$ & $q^2p|H|$ & $q^2|H|$ & $q^2(q{-}1)p$ & $q^2(q{-}1)$ & $2(q{+}2)(q{+}1)p$ & $2(q{+}2)(q{+}1)$   \\ \hline \xrowht[()]{6pt}
$\is(K)$ & $1$       & $1$      & $q{+}1$       & $q{+}1$      & $q(q{-}1)/2$       & $q(q{-}1)/2$  \\ \hline \xrowht[()]{6pt}
$\mu(K)$ & $1$       & $-1$     & $-1$          & $1$          & $-1$               & $1$       \\ \hline
\end{tabular}
\smallskip

\begin{tabular}{|c||c|c|c|c|c|c|c|c|}
\hline \xrowht[()]{8pt}
$K$      & $D_{2(q{-}1)}C_p$ & $D_{2(q{-}1)}$ & $C_{q{-}1}C_p$ & $C_{q{-}1}$  & $C_{2p}$       & $C_2$     & $C_p$          & $C_1$         \\ \hline \xrowht[()]{6pt}
$\ho(K)$ & $2q(q{-}1)p$      & $2q(q{-}1)$    & $(q{-}1)p$     & $q{-}1$      & $2\cdot 2p$    & $4$       &   $p$            & $1$           \\ \hline \xrowht[()]{6pt}
$\is(K)$ & $q(q{+}1)/2$      & $q(q{+}1)/2$   & $q(q{+}1)/2$   & $q(q{+}1)/2$ & $q(q^2{-}1)/2$ & $q^2{-}1$ & $q(q^2{-}1)/6$ & $1$           \\ \hline \xrowht[()]{6pt}
$\mu_K$  & $-1$              & $1$            & $2$            & $-2$         & $2$            & $-q$      &  $-6$           & $q(q^2{-}1)$  \\ \hline
\end{tabular}
\caption{Values of $\ho(K)$, $\is(K)$ and $\mu(K)$ for intersections of maximal subgroups $K{<}G$.}\label{tab:X}
\end{table}

As the next step, let us group the terms appearing in \eqref{eq:EG} into pairs of the form $\{LC_p,\,L\}$, where $L$ ranges over the set ${\cal L}=\{H,\,\Aff_q,\, D_{2(q{+}1)},\, D_{2(q{-}1)},\, C_{q{-}1},\,C_2,\,C_1\}$. Then, one may check that every partial sum
\begin{equation}\label{eq:L}
\mu(LC_p)\,\is(LC_p)\,\ho(LC_p) + \mu(L)\,\is(L)\,\ho(L), \ \ L\in {\cal L}
\end{equation}
appearing in \eqref{eq:EG} and consisting of products of three values in neighbouring columns headed $LC_p$ and $L$ for $L\in {\cal L}$, turns out to be a multiple of $(p-1)|H|$. This greatly facilitates evaluation of $\ep(G)$ in \eqref{eq:EG}, and after simplification one obtains the result in the form $\ep(G) = (p-1)|H| (q-1)(q-2)$.
\smallskip

Finally, following {\sl Step 3} of the strategy outline of Section \ref{sec:strat}, the sought number ${\rm map}(G)$ of orientably-regular maps $M$ of unspecified type and such that $\Aut(M)\cong G$ is equal to ${\rm map}(G) = \ep(G)/|G| = \ep(G)/(p|H|) = (p-1)(q-1)(q-2)/p$. Observe that the latter is a whole number, since $p$ is a divisor of $q-2 = 2(2^{p{-}1}-1)$ by Fermat's Little Theorem. Summing up, we have proved:
\smallskip

\begin{theorem}\label{thm:unspec}
Let $q=2^p$ for an arbitrary prime $p\ge 5$. Then, the number ${\rm map}(G)$ of pairwise non-isomorphic orientably-regular maps of unspecified type and with automorphism group isomorphic to $G=\PGam(2,q)$, is equal to $(p-1)(q-1)(q-2)/p$. 
\end{theorem}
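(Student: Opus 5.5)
The plan follows Hall's scheme from Section~\ref{sec:strat}. Since $\Aut(G)\cong G$ (by \cite{Cur}, as recorded in \emph{Step 3}) and $\Aut(G)$ acts freely on generating pairs, we have ${\rm map}(G)=\ep(G)/|G|$ with $|G|=p|H|=pq(q^2-1)$. It therefore suffices to show that $\ep(G)=(p-1)|H|(q-1)(q-2)$. For this I would use the inversion formula \eqref{eq:inv} in the grouped form \eqref{eq:EG}. By Hall's criterion and Theorem~\ref{thm:Mob}, only the intersections of maximal subgroups listed in Table~\ref{tab:max} contribute, and each of them forms a single conjugacy class of known size $\is(K)$.

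The first concrete step is to compute $\ho(K)=|K|\cdot(1+\#\text{involutions in }K)$ for every such $K$. Every involution of $G$ lies in $H$, because elements outside $H$ have order $p$, $2p$ or $3p$ by Corollary~\ref{cor:orders}. Hence $\ho(LC_p)=p\,\ho(L)$. The involution counts are standard:
\begin{itemize}
\item $H$ has $q^2-1$ involutions;
\item $\Aff_q$ has $q-1$ (the nontrivial translations);
\item $D_{2(q\pm1)}$ has $q\pm1$ (the reflections);
\item cyclic groups of odd order have none, and $C_2$ and $C_{2p}$ each have one.
\end{itemize}
These give $\ho(D_{2(q+1)})=2(q+1)(q+2)$, $\ho(D_{2(q-1)})=2q(q-1)$, and so on. I would tabulate $\ho$, $\is$ and $\mu$ as in Table~\ref{tab:X}.

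Next I would pair each $L$ with $LC_p$. Since $\is(LC_p)=\is(L)$ and $\ho(LC_p)=p\,\ho(L)$, each pair contributes $\is(L)\ho(L)\bigl(p\,\mu(LC_p)+\mu(L)\bigr)$. Most pairs have $\mu(LC_p)=-\mu(L)$, so their contribution is $-(p-1)\mu(LC_p)\is(L)\ho(L)$. For the pair $\{C_{2p},C_2\}$ and the pair $\{C_p,C_1\}$ the values of $\mu$ do not cancel this way, so I would treat them directly. Then I would check that every contribution, including these two, is a multiple of $(p-1)|H|$, and sum the resulting polynomial factors in $q$, aiming for $(q-1)(q-2)$. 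Dividing by $p|H|$ gives $(p-1)(q-1)(q-2)/p$. Integrality is a consistency check: $p\mid q-2$ by Fermat's Little Theorem.

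The main obstacle is the bookkeeping in the exceptional terms. The coefficient $\mu(C_p)=-6$ with $\is(C_p)=|H|/6$ gives $-p|H|$, which must combine with $\mu(C_1)=|H|$ to give $(1-p)|H|$. The coefficient $\mu(C_2)=-q$ with $\mu(C_{2p})=2$ must combine correctly with $\is(C_{2p})=|H|/2$ and $\is(C_2)=q^2-1$. I would verify these two pairs first, since an error in either propagates directly. After that I would collect the remaining terms,
\[
-(p-1)|H|\Bigl[q^2-q-(q+2)-q+1-\ldots\Bigr],
\]
term by term, and as a sanity check compare against a small case.
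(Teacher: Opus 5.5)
Your plan is the paper's proof: Hall's inversion over the intersections of maximal subgroups in Table~\ref{tab:X}, grouped into pairs $\{LC_p,L\}$ that each contribute a multiple of $(p-1)|H|$, followed by division by $|G|=|\Aut(G)|$.

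There is one sign slip to fix. When $\mu(L)=-\mu(LC_p)$, the pair contributes $+(p-1)\mu(LC_p)\is(L)\ho(L)$, not $-(p-1)\mu(LC_p)\is(L)\ho(L)$. So the five regular pairs give $+(p-1)|H|(q^2-3q-1)$. With your leading minus sign, $\ep(G)$ would come out negative once the exceptional pairs are added.

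The exceptional pairs contribute $4(p-1)|H|$ and $-(p-1)|H|$. The total is therefore $(p-1)|H|(q^2-3q+2)=(p-1)|H|(q-1)(q-2)$, as required.
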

\smallskip

Comparing the result of Theorem \ref{thm:unspec} with that of \cite[p. 302]{DJ} it follows that the number of isomorphism classes of orientably-regular maps $M$ such that $\Aut(M)\cong \PGam(2,q)$ for $q=2^p$ with prime $p\ge 5$ is equal to $(p{-}1)$-times the number of such classes of maps with $\Aut(M)\cong \PSL(2,q)$.
\smallskip


\section{\bf Character table for ${\bf{P\Gamma L}}({\bf{2}},{\bf{2}}^{\bf{\textit p}})$}\label{sec:char}

\

The character table for the group $H=\PSL(2,q)$ for $q$ a power of $2$ is well known, and we reproduce it here in a form almost identical with the one given in \cite[Theorem 38.2]{Dor}), in order to explain how it extends to a character table of $G=\PGam(2,q)$ when restricted to $q=2^p$ for prime $p\ge 5$. To save space, from this point on we will use the subscript notation $A_t$ for elements $(A,t)\in G$ with $A\in H$ and $t\in C_p$. The  conjugacy class containing a particular element, say, $A\in H$ or $A_t\in G$, will be denoted by $[A]$ and, respectively, $[A_t]$; the group will always be clear from the context. Also, it will prove useful to work with a somewhat non-standard notation and let, for an element $\xi$ of a field, the symbol $\xi(d)$ denote the quantity $\xi^{\m d} + \xi^{-d}$ for integer $d$. With this we also extend the definition of the element $E\in H$ from Section \ref{sec:p} by letting $E(\xi(d))$ denote the $2\times 2$ matrix with top left entry equal to zero, off-diagonal entries equal to $1$, and the bottom right entry equal to $\xi(d) = \xi^{\m d} + \xi^{-d}$.
\smallskip

We will now briefly revisit conjugacy classes and the table of irreducible characters of the group  $H=\PSL(2,q)$.
\smallskip

{\bf Conjugacy classes of $\bf{\textit H}$.} Let $\zeta$ be a primitive element of $\GF(q)$ and let $\eta$ be a primitive $(q{+}1)^{\rm st}$ root of unity in $\GF(q^2)$, so that, in the new notation, $\eta(1) = \eta{+}\eta^{-1}\in \GF(q)$. The group $H$ contains $(q{-}2)/2$ conjugacy classes of the form $[E(\zeta(j))]$ for $j\in \{1,2, \ldots,(q{-}2)/2\}$, and $q/2$ conjugacy classes of the form $[E(\eta(k))]$ for $k\in \{1,2,\ldots,q/2\}$, with respective centralisers of order $q-1$ and $q+1$. Together with the trivial class $[I]$ of the unit element $I$, and a single conjugacy class $[E(0)] = [\off(1)]$ of involutions with centraliser of order $q$, these comprise a total of $q+1$ conjugacy classes of $H$.
\smallskip

{\bf Irreducible characters of $\bf{\textit H}$.} The symbols $\iota$ and ${\rm St}$ denote the trivial and the Steinberg character. For the remaining character we use the symbols $\chi$ and $\theta$ as in \cite[Theorem 38.2]{Dor}), indexed, respectively, by $\ell\in \{1,2,\ldots, (q{-}2)/2\}$ and $m\in \{1,2,\ldots, q/2\}$. With $\alpha = \exp(2\pi i/(q{-}1))$ and $\beta = \exp(2\pi i/(q{+}1))$ being primitive complex $(q{-}1)^{\rm st}$ and $(q{+}1)^{\rm st}$ roots of unity, the corresponding values of $\chi_\ell$ and $\theta_m$ on the conjugacy classes $[E(\zeta(j))]$ and $[E(\eta(k))]$ are, respectively, $\alpha(j\ell) = \alpha^{\m j\ell}{+} \alpha^{-j\ell}$ and $-\beta(km) = -(\beta^{\m km} {+} \beta^{-km})$, giving a total of $q{+}1$ irreducible characters of $H$ in Table \ref{tab:PSL}.
\smallskip

\begin{table}[ht]
\small
	\centering
\begin{tabular}{|c||c|c|c|c|}
\hline \xrowht[()]{8pt}
Conjugacy class & $[I]$ & $[E(0)]$ & $[E(\zeta(j))]$ & $[E(\eta(k))]$ \\ \xrowht[()]{3pt}
Number of classes & $1$ & $1$ & $(q{-}2)/2$ & \ \ $q/2$ \ \ \\ \hline\hline \xrowht[()]{8pt}
$\iota$ & $1$ & $1$ & $1$ & $1$ \\ \hline \xrowht[()]{6pt}
${\rm St}$ & $q$ & $0$ & $1$ & $-1$ \\ \hline \xrowht[()]{6pt}
$\chi_\ell$ & $q{+}1$ & $1$ & $\alpha(j\ell)$ & $0$  \\ \hline \xrowht[()]{6pt}
$\theta_m$ & $q{-}1$ & $-1$ & $0$ & $-\beta(km)$ \\ \hline
\end{tabular}
\caption{Table of irreducible characters of $H=\PSL(2,q)$ for $q$ a power of $2$.}\label{tab:PSL}
\end{table}
\smallskip

To set up a character table of the group $G=\PGam(2,q)$, where $q=2^p$ for prime $p\ge 5$, we need to introduce more notation and facts. Keeping $\zeta$ and $\eta$ as before and sticking to the notational conventions introduced earlier, for $\zeta(j) = \zeta^{\m j}{+}\zeta^{-j}$ and $\eta(k) = \eta^{\m k}{+} \eta^{-k}$ let us introduce the sets $R_\zeta = \{\zeta(j) \mid 1\le j\le (q{-}2)/2 \}$ and $R_\eta = \{\eta(k) \mid 1\le k\le q/2 \}$. Let $\phi:\ z\mapsto z^2$ be the generator of the cyclic automorphism group of $\GF(q)$, of order $p$.
\smallskip

Determining the size of an orbit of $\langle\phi\rangle$ on the sets $R_\zeta$ and $R_\eta$ is equivalent to finding, for $j\in \{1,2,\ldots, (q{-}2)/2\}$ and $k\in \{1,2,\ldots, q/2\}$, the smallest positive $s,t \le p$ such that $\zeta(2^sj) = \zeta(j)$ and $\eta(2^tk) = \eta(k)$, respectively. These conditions are equivalent, respectively, to $2^p-1$ dividing $(2^s\pm 1)j$ and $2^p+1$ dividing $(2^t\pm 1)k$. But $\gcd(2^p-1,2^s\pm 1) = 1$ for $s<p$, and $\gcd(2^p+1,2^t\pm 1)=1$ for $t < p$ except for the single case when $t=1$ and $\gcd(2^p+1, 2^t+1) =3$. It follows that, for $j\le (q{-}2)/2$, the only value of $s$ in the given range that satisfies the divisibility condition $(2^p-1) \mid (2^s\pm 1)j$ is $s=p$, with $(q{-}2)/(2p) = (2^{p{-}1}{-}1)/p$ orbits of $\langle \phi \rangle$ of size $p$ on the set $R_\zeta$. Similarly, for $k\le q/2$  the only possibilities for $t$ in the given range to satisfy the condition $(2^p+1) \mid (2^t\pm 1)j$ are $t\in \{1,p\}$, with a singleton orbit of $\langle\phi \rangle$ on $R_\eta$ for $t=1$ and $k= (q{+}1)/3$, and the remaining $(q/2{-}1)/p=(2^{p-1}{-}1)/p$ orbits on $R_\eta$ of size $p$ for $t=p$.
\smallskip

For each orbit of the action of the group $\langle\phi\rangle$ on $R_\zeta$ and $R_\eta$ we choose a representative $\zeta(j_\phi)$ and $\eta(k_\phi)$, respectively; we may assume that $1\le j_\phi \le (q{-}2)/2$ and $1\le k_\phi \le q/2$, so that for the singleton orbit $\{\eta(\kappa)\}$ for $\kappa = (q{+}1)/3$ we have $k_\phi = \kappa$. Let ${\rm Rep}(R_\zeta)$ and ${\rm Rep}(R_\eta)$ be respective sets of such representatives, and let ${\cal P}_\zeta = \{j_\phi \mid \zeta(j_\phi)\in {\rm Rep}(R_\zeta)\}$ and ${\cal P}_\eta = \{k_\phi \mid k_\phi\ne\kappa \ {\rm and}\ \eta(k_\phi)\in {\rm Rep}(R_\eta)\}$ be the corresponding sets of powers applied to $\zeta$ and $\eta$ in the sets ${\rm Rep}(R_\zeta)$ and ${\rm Rep}(R_\eta)$, except for $\kappa$ in the latter set. Arguments from the previous paragraph imply that $|{\rm Rep}(R_\zeta)| = |{\rm Rep}(R_\eta)| = (q-2)/(2p)$.
\smallskip

For the action of $\langle\phi\rangle$ as a subgroup of the automorphism group of $H=\PSL(2,q)$ on its conjugacy classes, our analysis of orbits of $\langle\phi\rangle$ on the sets $R_\zeta$ and $R_\eta$ implies the following. Omitting from now on the subscript $\phi$ at representative exponents in ${\cal P}_\zeta$ and ${\cal P}_\eta$, for every $j\in {\cal P}_\zeta$ the $p$ conjugacy classes of $H$ represented by elements $E(\zeta(2^uj))$ for $u \in \{0,1,\ldots p{-}1\}$ give rise (under the action of $\langle\phi\rangle$) to a single conjugacy class of $G$ with representative element $E(\zeta(j))_0$, totalling $(q-2)/(2p)$ such conjugacy classes of $G$. The situation is similar for $R_\eta$, where for every $k\in {\cal P}_\eta$ the $p$ conjugacy classes of $H$ represented by elements $E(\eta(2^u k))$ for $u\in \{0,1,\ldots,p{-}1\}$ of $H$ form (again under the action of $\langle\phi \rangle$) a single conjugacy class of $G$ with representative element $E(\eta(k))_0$. Together with the conjugacy class with representative element $E(\eta(\kappa))_0$, one has in total $1+(q-2)/(2p)$ conjugacy classes of $G$ of this kind. For simplification, note that $\eta^{\pm\kappa}$ are primitive $3^{\rm rd}$ roots of unity, so that $\eta(\kappa) = \eta^{\m \kappa} + \eta^{-\kappa} = 1$, and hence $E(\eta(\kappa))_0 = E(1)_0$. Observe also that $\langle\phi\rangle$ induces orbits of size $p$ on the subsets of elements of the sets ${\cal P}_\zeta$ and ${\cal P}_\eta$ divisible by $3$, so that each of the two sets contains exactly $(q-2)/(6p)$ multiples of $3$.
\smallskip

With this preparation we are ready to describe entries of a table of irreducible characters for the group $G=\PGam(2,q)$ with $q=2^p$ for prime $p\ge 5$.
\smallskip

{\bf Conjugacy classes of $\bf{\textit G}$.} Beside the trivial class $[I_0]$ of the unit element, the group $G$ contains a single conjugacy class $[E(0)_0]$ of involutions, with centraliser of order $pq$, and also a single conjugacy class $[E(1)_0]$ of elements of order $3$, with centraliser of order $p(q+1)$; note that $E(1)= E(\eta(\kappa))$ because both $\eta^\kappa$ and $\eta^{-\kappa}$ are primitive $3^{\rm rd}$ roots of $1$ in $\GF(q)$ and so $\eta^\kappa + \eta^{-\kappa}=1$. Further, $G$ contains $(q-2)/(2p)$ conjugacy classes of the form $[E(\zeta_j)_0]$ for $j\in {\cal P}_\zeta$, with centraliser of order $q-1$, and the same number of conjugacy classes of the form $[E(\eta_k)_0]$ for $k\in {\cal P}_\eta$ (recall that here $k\ne\kappa$), with centraliser of order $q+1$. Elements of the form $A_t\in G$ for $t\in \{1,2,\ldots,p{-}1\}$ form exactly $3(p-1)$ conjugacy classes, represented by the elements $I_t$, $E(0)_t$ and $E(1)_t$ of orders $p$, $2p$ and $3p$, respectively, with orders of centralisers $6p$ for $I_t$, $2p$ for $E(0)_t$, and $3p$ for $E(1)_t$, $1\le t\le p{-}1$. This gives a total of $3 + 2(q-2)/(2p) + 3(p-1) = 3p+(q-2)/p$ conjugacy classes of $G$.
\smallskip

{\bf Irreducible characters of $\bf{\textit G}$.} In our tables we will use the same symbols for characters of $H$ and $G$, and to avoid clashes we will temporarily use the lower prescript $`H'$ for characters of $H$ that serve as basis for lifting or inducing characters from $H$ to $G$.
\smallskip

Let $\gamma = \exp(2\pi i/p)$ be a complex $p^{\rm th}$ root of unity. In Table \ref{tab:PGam} for $G$, the values of the characters $\iota_s$ for $s\in \{0,1,\ldots,p{-}1\}$ on the conjugacy classes $I_t$, $E(0)_t$ and $E(1)_t$ for $t\in \{1,2,\ldots,p{-}1\}$ are equal to $\gamma^{st}$. Observe that the linear characters $\iota_s$ are lifts of the $p$ linear characters of the quotient group  $G/H$ (isomorphic to the cyclic group $C_p$ of order $p$) to the entire group $G=\PGam(2,q)$, with $\iota_0$ being the trivial character. The characters ${\rm St}{\cdot}\iota_s$ for $s\in \{0,1,\ldots, p{-}1\}$ are products of the Steinberg character ${\rm St}$ of $G$, of dimension $q$, with the $p$ linear characters. The $(q-2)/(2p)$ characters $\chi_\ell$ of $G$ for $\ell \in {\cal P}_\zeta$, of dimension $p(q+1)$, are obtained as those induced by any of the characters $\leftindex_{H} \chi_{\ell{\m '}}$ of the subgroup $H<G$ from Table \ref{tab:PSL} for $\ell{\m '}=2^u\ell$, $0\le u\le p{-}1$. Similarly, the same number $(q-2)/(2p)$ of characters $\theta_m$ of $G$ for $m \in {\cal P}_\eta$, of dimension $p(q-1)$, are obtained as those induced by any of the characters $\leftindex_H\theta_{m{\m '}}$ of $H$ from Table \ref{tab:PSL} for $m{\m '}=2^um$, $0\le u\le p{-}1$. Such an induction can also be applied to the character $\leftindex_H\theta_\kappa$ of $H$ for $\kappa=(q{+}1)/3$, a value that has been excluded from ${\cal P}_\eta$ in the description of conjugacy classes. But in this case the induced character splits into $p$ character of $G$ of dimension $(q-1)$ each, which can be described as products $\theta_\kappa {\cdot}\iota_s$ for $s\in \{0,1,\ldots, p{-}1\}$, where the values of the character $\theta_\kappa$ of $G$ are those in the last line of Table \ref{tab:PGam} for $s=0$.
\smallskip

With this description we are in position to present the main result of this section.

\begin{theorem}\label{thm:charG}
A table of irreducible characters of the group $G=\PGam(2,q)$, with $q=2^p$ for prime $p\ge 5$, is as follows: \begin{table}[ht]
\scriptsize
	\centering
\begin{tabular}{|c||c|c|c|c|c|c|c|c|}
\hline \xrowht[()]{8pt}
Conjug. cl. & $[I_0]$ & $[E(0)_0]$ & $[E(1)_0]$ & $[E(\zeta(j))_0]$, $j{\in} {\cal P}_\zeta$ &  $[E(\eta(k))_0]$, $k{\in} {\cal P}_\eta$ & $[I_t]$ & $[E(0)_t]$ & $[E(1)_t]$\\ \xrowht[()]{4pt}
$\#$ classes & $1$ & $1$ & $1$ & $(q{-}2)/(2p)$ & \ \ $(q{-}2)/(2p)$ \ \ & $p{-}1$ & $p{-}1$ & $p{-}1$ \\ \hline\hline \xrowht[()]{8pt}
$\iota_s$, $0{\le}s{<}p$ & $1$ & $1$ & $1$ & $1$ & $1$ & $\gamma^{st}$ & $\gamma^{st}$ & $\gamma^{st}$ \\ \hline \xrowht[()]{6pt}
${\rm St}{\cdot}\iota_s$ & $q$ & $0$ & $-1$ & $1$ & $-1$ & $2\gamma^{st}$ & $0$ & $-\m\gamma^{st}$ \\ \hline \xrowht[()]{11pt}
$\chi_\ell$, $\ell{\in} {\cal P}_\zeta$ & $p(q{+}1)$ & $p$ & $0$ & $\sum_{u=0}^{p{-}1} \alpha(2^uj\ell)$ & $0$ & $0$ & $0$ & $0$ \\ \hline \xrowht[()]{11pt}
$\theta_m$, $m{\in} {\cal P}_\eta$  & $p(q{-}1)$ & $-\m p$ & $p$ $(3\m\m{\nmid}\m\m m)$; $-2p$ $(3{\mid}m)$ & $0$ & $-\sum_{u=0}^{p{-}1} \beta(2^ukm)$ & $0$ & $0$ & $0$ \\ \hline \xrowht[()]{6pt}
$\theta_\kappa{\cdot}\iota_s$ & $q{-}1$ & $-1$ & $1$ & $0$ & $1$ $(3\m\m{\nmid}\m\m k)$; $-2$ $(3{\mid}k)$  & $\gamma^{st}$ & $-\m\gamma^{st}$ & $\gamma^{st}$  \\ \hline
\end{tabular}
\caption{Table of irreducible characters of $G=\PGam(2,q)$, $q=2^p$, for prime $p\ge 5$.}\label{tab:PGam}
\end{table}
\end{theorem}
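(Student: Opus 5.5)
Since $G/H\cong C_p$ is cyclic of prime order and $H$ is simple, I will use Clifford theory for the pair $H\lhd G$. The plan is to compute the action of $G/H=\langle\phi\rangle$ on the irreducible characters of $H$ in Table \ref{tab:PSL}, identify the invariant characters and the orbits of size $p$, and produce the rows of Table \ref{tab:PGam} from these. Completeness will then follow from counting: the number of rows must equal the number $3p+(q-2)/p$ of conjugacy classes established before the statement.

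The action of $\phi$ on characters is $\chi^\phi(A)=\chi(A^{[1]})$. The class $[E(\zeta(j))]$ is sent to $[E(\zeta(2j))]$ and $[E(\eta(k))]$ to $[E(\eta(2k))]$, so $\leftindex_H\chi_\ell\mapsto \leftindex_H\chi_{2\ell}$ and $\leftindex_H\theta_m\mapsto\leftindex_H\theta_{2m}$, up to the identifications $\ell\leftrightarrow -\ell$. The orbit analysis already carried out for $R_\zeta$ and $R_\eta$ (the gcd computations) then shows the following: $\iota$, ${\rm St}$ and $\leftindex_H\theta_\kappa$ are the only $\phi$-invariant characters, and all other characters lie in orbits of size $p$, with $(q-2)/(2p)$ orbits of each type.

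For a non-invariant $\psi$, Clifford theory gives an irreducible induced character $\psi^G$ of degree $p\psi(1)$. By the induction formula it vanishes outside $H$, and on a class of $H$ it equals $\sum_u\psi^{\phi^u}$. This yields the rows $\chi_\ell$ and $\theta_m$. The entries to check are the value $p$ at $E(0)_0$, and the value at $E(1)_0$, which is $-\sum_u\beta(2^u\kappa m)$. Here $\beta^{\kappa m}$ is a cube root of unity, so $\beta(\kappa m)=-1$ or $2$ according as $3\nmid m$ or $3\mid m$, giving $p$ or $-2p$.

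Each invariant character extends to $G$, because $G/H$ is cyclic, and its $p$ extensions differ by the linear characters $\iota_s$. For $\iota$ this gives the rows $\iota_s$. The main obstacle is fixing the values of the extensions of ${\rm St}$ and $\theta_\kappa$ on the outer classes $I_t$, $E(0)_t$ and $E(1)_t$. For ${\rm St}$ I would realise the extension concretely as the permutation character of $G$ on the $q+1$ points of the projective line minus $\iota_0$. Then ${\rm St}(g)$ is the number of fixed points minus one. The element $I_t$ fixes the $3$ points over $\GF(2)$, and the proposition on centralisers suggests that $E(0)_t$ fixes $1$ point and $E(1)_t$ fixes $0$ points, giving $2,0,-1$. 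For $\theta_\kappa$ I would first try to find an explicit model in the same way, for instance via a permutation character on cosets of $D_{2(q+1)}C_p$ or $\Aff_qC_p$ combined with known characters. Failing that, I would determine the values $a_1,a_2,a_3$ on $I_t,E(0)_t,E(1)_t$ by the second orthogonality relations for these outer columns, whose centraliser orders $6p,2p,3p$ are known, together with column orthogonality against the columns inside $H$. Taking $t$-independence up to $\gamma^{st}$ as given, these relations determine $|a_i|^2$ and the signs, and they should force $(1,-1,1)$, consistent with $1+1\cdot$ (contributions of ${\rm St}$) summing correctly; if needed, I would use Brauer's characterisation on the elementary subgroups $C_{2p}$ and $C_{3p}$ as indicated in Stage 1.

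Finally I would count the rows: $p+p+(q-2)/(2p)+(q-2)/(2p)+p=3p+(q-2)/p$, which equals the number of classes. As a check, $\sum\chi(1)^2=|G|$ confirms that the list is complete.
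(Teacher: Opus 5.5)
Your Clifford-theoretic skeleton is sound: the $\langle\phi\rangle$-orbits, the irreducibility and values of the induced $\chi_\ell,\theta_m$ (including $p$ or $-2p$ at $E(1)_0$), the Gallagher twists by $\iota_s$, and the count $3p+(q-2)/p$. For ${\rm St}$ your route is more elementary than the paper's. The permutation character of $G$ on $\mathbb{P}^1(\GF(q))$ minus $\iota_0$ restricts to ${\rm St}$ on $H$, so it is an extension, and its outer values come from fixed-point counts. This replaces the paper's appeal to general Steinberg-value facts followed by a sign check on $C_p$. Verify those counts directly rather than from centralisers:
\begin{itemize}
\item $I_t$ fixes exactly the three $\GF(2)$-points.
\item $E(0)_t$ fixes $[x:1]$ iff $x^{2^t+1}=1$, so only $x=1$, since $\gcd(2^t+1,2^p-1)=1$.
\item A fixed point $[x:1]$ of $E(1)_t$ would need $x^{2^t+1}+x+1=0$. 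Iterating Frobenius gives $x^{2^{3t}}=x$, hence $x\in\GF(2)$, and neither element of $\GF(2)$ is a root.
\end{itemize}
This gives the values $2,0,-1$.

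The gap is at the crux of the theorem: the outer values $a_1(t),a_2(t),a_3(t)$ of an extension $\hat\theta$ of $\theta_\kappa$. Orthogonality cannot force $(1,-1,1)$.
\begin{itemize}
\item The column relations at $I_t,E(0)_t,E(1)_t$ give $|a_i(t)|=1$, $a_2(t)=-a_1(t)$ and $a_3(t)=a_1(t)$, and nothing more.
\item All other column relations are vacuous, because the $p$ twists contribute $\sum_s\gamma^{st}=0$.
\item The row relations of $\hat\theta$ only impose $\frac16a_1+\frac12a_2+\frac13a_3=0$, $\frac13a_1-\frac13a_3=0$ and conditions on $|a_i|$.
\end{itemize}
So replacing $a_i(t)$ by $c(t)a_i(t)$ with $|c(t)|=1$ violates no orthogonality relation; for example, $(-1,1,-1)$ passes all of them. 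Both the sign and the ``$t$-independence up to $\gamma^{st}$'' that you take as given are left undetermined by your method.

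Integrality input is indispensable. The paper supplies it by writing down the candidate $\vartheta$ and checking Brauer's characterisation on all elementary subgroups, together with $\langle\vartheta,\vartheta\rangle=1$. Your hedge toward Brauer is the right instinct, but it must be the argument, not a fallback. A lighter fix that fits your route:
\begin{enumerate}
\item Restrict to $P=\langle I_1\rangle\cong C_p$, noting $I_1^t=I_t$, and write $\hat\theta|_P=\sum_s m_s\lambda_s$ with $m_s\in\mathbb{Z}_{\ge0}$.
\item Then $\sum_s m_s=q-1$, while $|a_1(t)|=1$ gives $\sum_s m_s^2=((q-1)^2+p-1)/p$.
\item Write $q-1=pM+1$ and $m_s=M+k_s$. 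This yields $\sum_s k_s=\sum_s k_s^2=1$, so $\hat\theta|_P=M\rho_P+\lambda_{s_0}$ for some $s_0$, i.e.\ $a_1(t)=\gamma^{s_0t}$.
\end{enumerate}
This proves the $t$-dependence you assumed and excludes the sign $-1$, which would give $\lambda_0$ multiplicity $(q-p)/p\notin\mathbb{Z}$. Twisting by $\iota_{-s_0}$ and using the relative relations then gives $(1,-1,1)$.

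Your other idea, an explicit permutation model, does not work as stated. By Mackey, the permutation characters on cosets of $\Aff_qC_p$, $D_{2(q-1)}C_p$ and $D_{2(q+1)}C_p$ restrict to $H$ as $1+{\rm St}$, $1+{\rm St}+\sum_\ell\chi_\ell$ and $1+\sum_\ell\chi_\ell$ respectively. None of these contains a $\theta$-constituent.
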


{\bf Proof.} The table contains $p$ characters in each of the families $\iota_s$, ${\rm St}{\cdot}\iota_s$ and $\theta_\kappa{\cdot}\iota_s$ for $s\in \{0,1,\ldots,p{-}1\}$, and $(q{-}2)/ (2p)$ characters in each of the families $\chi_\ell$ for $\ell\in {\cal P}_\zeta$ and $\theta_m$ for $m\in {\cal P}_\eta$, giving a total of $3p + (q{-}2)/2$ characters, which is equal to the number of conjugacy classes of $G = \PGam(2,q)$ for $q=2^p$ and prime $p\ge 5$. We will subsequently consider each of these five families, leaving out routine calculations based on row and column orthogonality of irreducible characters.
\smallskip

Recalling the (normal) subgroup $H\cong \PSL(2,q)$ of $G$, for every $s\in \{0,1,\dots, p{-}1\}$ the linear (and hence irreducible) character $\iota_s$ is simply a lift of a character of the cyclic group $G/H$ of order $p$ from $H$ to $G$.
\smallskip

As indicated in the overall description of characters of $G$, for every $\ell\in {\cal P}_\zeta$ the character $\chi_\ell$ of $G$ is obtained by induction from any of the $p$ characters $\leftindex_H \chi_{\ell{\m '}}$ of $H$ from Table \ref{tab:PSL} for $\ell{\m '}=2^u\ell$, $0\le u\le p{-}1$, of dimension $q+1$. Since the group $\langle\phi\rangle$ acts regularly on this set of $p$ characters, it follows from \cite[Corollary 6.3]{Whi} that the induced character $\chi_\ell$ of $G$ is irreducible, of dimension $p(q+1)$. Similarly, for every $m\in {\cal P}_\eta$ the character $\theta_m$ of $G$ arises by induction from any of the $p$ characters $\leftindex_H \theta_{m{\m '}}$ of $H$ of dimension $q-1$ for $m{\m '}=2^u m$, $0\le u\le p{-}1$. Recalling again that $\kappa = (q+1)/3$ is not contained in ${\cal P}_\eta$, this set of $p$ characters is again permuted regularly by the  group $\langle\phi\rangle$, and from \cite[Corollary 6.3]{Whi} one again concludes that the induced character $\theta_m$ of $G$ is irreducible and has dimension $p(q+1)$, for every $m\in {\cal P}_\eta$.
\smallskip

It remains to consider characters of $G$ which reduce to $G$-invariant characters of $H$, the latter being the Steinberg character and the character $\leftindex_H\theta_\kappa$. By Corollary 6.20 (or, Corollary 11.22, in a more general setting) of \cite{Isaacs}, the $G$-invariant irreducible characters of $H$ arise by restriction (to $H$) of any of the irreducible constituents of the corresponding induced character of $G$.
\smallskip

The $p$ irreducible constituents of the character of $G$ obtained by inducing the Steinberg character from $H$ to $G$ have the form ${\rm St}{\cdot}\iota_s$ for $s\in \{0,1,\ldots,p{-}1\}$, out of which ${\rm St}{\cdot} {\iota_0}$ is the actual Steinberg character of $G$. The values of of the $p$ characters ${\rm St}{\cdot}\iota_s$  on the first five conjugacy classes of Table \ref{tab:PGam} therefore coincide with the values of the Steinberg character ${\rm St}$ on $H$. In general, values of the Steinberg character for (an extension of) a linear group over a field of characteristic $2$ are known to be $0$ on conjugacy classes of elements of even order, and, up to a sign, equal to the order of a Sylow $2$-subgroup of the centraliser of an element in conjugacy classes of elements of odd order. Combining this with row orthogonality to the trivial character $\iota_0$ it follows that the values of the Steinberg character ${\rm St}{\cdot}\iota_0$ of $G$ in the three rightmost columns of Table \ref{tab:PGam} may be assumed to be equal to $2\delta,\,0,\,-\delta$ for some $\delta\in \{+1,-1\}$. But if one assumes that $\delta = -1$, the restriction of such a class function to the cyclic subgroup $C_p$ of $G$ is not a generalised character of $C_p$, which contradicts Corollary 8.12 of \cite{Isaacs}. This proves validity of the values of ${\rm St}{\cdot}\iota_s$ displayed in Table \ref{tab:PGam}.
\smallskip

The last set of characters to be dealt with arises from the character $\leftindex_H\theta_\kappa$ of $H$, stabilised by the action of the group $\langle \phi \rangle$ and hence $G$-invariant. The family of the $p$ irreducible constituents of the character of $G$ induced from $\leftindex_H \theta_\kappa$ may be assumed to have the form $\{\theta_\kappa{\cdot}{\iota_s}\}$ for $s\in \{0,1,\ldots, p{-}1\}$, of dimension $q{-}1$ each. By \cite[Corollary 6.20]{Isaacs}, each of these constituents has values $q{-}1$, $-1$, $1$, $0$ on the first four conjugacy classes of $G$ in Table \ref{tab:PGam}, respectively. By the same token, the values of each constituent on the conjugacy classes $[E(\eta(k))_0]$ for $k\in {\cal P}_\eta$ in Table \ref{tab:PGam} are $1$ if $3\nmid k$, and $-2$ if $3\mid k$, obtained by evaluating $-\beta(km)$ for $m=(q{+}1)/3$ from Table \ref{tab:PSL}.
\smallskip

To determine the values of $\theta_\kappa$ on the conjugacy classes of $G$ outside $H$, let us define a class function $\vartheta$ on conjugacy classes of $G$ by letting $\vartheta = \theta_\kappa$ on the conjugacy classes of $G$ contained in $H$, and letting $\vartheta = 1$ on the classes $[I_t]$ and $[E(1)_t]$, and $\vartheta = -1$ on the classes $[E(0)_t]$, in all cases for $t\in \{1,2,\ldots,p{-}1\}$. We will show that the restriction of $\vartheta$ to every Sylow $2$-subgroup $K$ of $G$, and also to every cyclic subgroup $L$ of $G$ of odd order, is a generalised character of $K$ and $L$, respectively.
\smallskip

Indeed, for every Sylow $2$-subgroup $K$ of $G$ the restriction $\vartheta_{\m |K}$ has values $q-1$ on the trivial class and $-1$ on the involutions of $K$, but observing that $\vartheta_{\m |K} = {\rm St}_{\m |K} - \iota_{\m |K}$ implies that $\vartheta_{\m |K}$ is a generalised character of $K$. For $L$ a non-trivial cyclic subgroup of order $d$ a divisor of $q{-}1$ one similarly obtains that $\vartheta_{\m |L} = {\rm St}_{\m |L} - \iota_{\m |L}$. For simplicity we omit the subscript on the trivial character in what follows. If $L\cong C_3$, then, letting $\nu_1$ and $\nu_2$ be the distinct non-trivial characters of $L$, for $\vartheta_{\m |L}$ with values $q-1$ on the trivial class and $1$ on elements of order $3$ one has $\vartheta_{\m |L} = c(\iota{+} \nu_1{+} \nu_2) + \iota$ for $c=(q{-}2)/3$. If $L\cong C_d$ for $d$ dividing $q{+}1$ and $d{>}3$, then the corresponding conjugacy classes of elements of order $d$ have the form $[E(\eta(k))_0]$, where $k$ satisfies the condition $d=\lcm(q{+}1, k)/k$; observe that one has $3\mid d$ if and only if $3\nmid k$. If $3\mid d$, then the restriction $\vartheta_{\m |L}$ has values $q-1$ on the trivial class and $1$ on non-trivial elements of $L$, but then one can take $\nu_1$ and $\nu_2$ to be the non-trivial characters of $L$ corresponding to the two non-trivial elements of $L\cong C_d$ of order $3$, with $\vartheta_{\m |L} = c(\iota {+}\nu_1{+}\nu_2) + \iota$ for $c=(q{-}2)/3$ as before. If $3\nmid d$, in which case $\vartheta_{\m |L}$ has values $q-1$ on the trivial class and $-2$ on non-trivial elements of $L\cong C_3$ then, letting $\Sigma$ denote the sum of all the $d$ irreducible characters of $L\cong C_d$ one may check that $\vartheta_{\m |L} = c\Sigma-2\iota$ for $c=(q{+}1)/d$. If $L\cong C_p = \{I_t\mid 0\le t\le p-1\}$, then, letting this time $\Sigma$ denote the sum of all the $p$ irreducible characters of $L$, one finds that $\vartheta_{\m |L} = c\Sigma+\iota$ for $c=(q{-}2)/p$ (which is a whole number by Fermat's Little Theorem). Similarly, for $L\cong C_{3p}$ with non-trivial elements contained in the classes $[E(1)_t]$, $1\le t\le p{-}1$, with $\Sigma$ standing for the sum of all the $3p$ irreducible characters of $L$, one may verify that $\vartheta_{\m |L} = c\Sigma+\iota$ for $c=(q{-}2)/(3p)$ the `$c$' here again being a whole number. Finally, if  $L\cong C_{2p}$ with non-trivial elements contained in the classes $[E(0)_t]$, $1\le t\le p{-}1$, one has
$\vartheta_{\m |L} = {\rm St}_{\m |L} - \iota$.
\smallskip

The Sylow $2$-subgroup $K$ with their subgroups and the cyclic subgroups $L$ considered in the previous paragraph comprise {\rm all} maximal $2$-subgroups and all maximal cyclic subgroups of $G$. These are, at the same time, the {\em elementary} subgroups of $G$, that is, those isomorphic to a direct product of a cyclic group and an $r$-group for some prime $r$. Our analysis then implies that the class function $\vartheta$ has the property that its restriction to every elementary subgroup of $G$ is a generalised character of the subgroup; one may also check that the value of the inner product $\langle \vartheta,\vartheta\rangle$ is equal to $1$. By Brauer's Characterisation Theorem \cite[Theorem 16.2]{Dor}, our class function $\vartheta$ is an irreducible character of $G$. One therefore may let $\theta_\kappa = \vartheta$, which, by multiplication by linear characters, gives rise to the family $\theta_\kappa{\cdot}\iota_s$ of $p$ irreducible characters of dimension $q{-}1$, featuring in the last row of Table \ref{tab:PGam}. \hfill $\Box$
\smallskip

\

\section{Enumeration of orientably-regular maps \\ of specified type on the group ${\bf{P\Gamma L}}({\bf{2}},{\bf{2}}^{\bf{\textit p}})$}\label{sec:enum-spec}

\
\

Let us begin by recalling the Frobenius formula \eqref{eq:Fr} from the description of {\sl Stage 2} of the second enumeration process in Section \ref{sec:strat}, by which for a finite group $G$ the number $\#({\cal A},{\cal B},{\cal C})$ of ordered triples $(x,y,z)$ of elements of $G$ with the property that $xyz=1$ and such that $x,y,z$ belong, respectively, to the conjugacy classes ${\cal A},{\cal B},{\cal C}$ in $G$, is given by
\begin{equation}\label{eq:Frob}
\#({\cal A},{\cal B},{\cal C})= \frac{|{\cal A}||{\cal B}||{\cal C}|}{|G|}\sum_{\chi} \frac{\chi({\cal A})\chi({\cal B})\chi({\cal C})}{\chi(id)} \end{equation}
with summation ranging over irreducible characters of $G$. For enumeration of ordered pairs $(x,y)$ of $G$ of given orders $m$ and $n$ such that $xy$ has order $2$, one needs to apply \eqref{eq:Fr} to all combinations of conjugacy classes ${\cal A}$, ${\cal B}$ and ${\cal C}$ of elements of order $m$, $n$ and $2$ in $G$.
\smallskip

For the group $G=\PGam(2,q)$ of our interest, with $q=2^p$ for some prime $p\ge 5$, its single conjugacy class of involutions of is contained in the unique subgroup $H=\PSL(2,q) < G$. This means that for a regular map ${\cal M}$ of type $\{m,n\}$ with $m,n\ge 3$ and such that $\Aut{\cal M} = \langle x,y\m ;\m x^m, y^n,\, (xy)^2,\, \ldots \rangle\cong G$, neither $x$ nor $y$ can be contained in $H$, for otherwise the pair $(x,y)$ would not generate $G$. By Proposition \ref{prop:p-3p}, the generator $x$ must be in one of the $3(p-1)$ conjugacy classes $C_1(t) =[I_t]$, $C_2(t)=[E(0)_t]$ and $C_3=[E(1)_t]$ of elements of order $p$, $2p$ or $3p$, respectively, for $t\in \{1,2,\ldots, p{-}1\}$. The same must hold for $y$, while the involution $xy$ must belong to the conjugacy class $[E(0)_0] = [\off(1)_0] \subset H$. In Frobenius' formula \eqref{eq:Frob} we thus may let the conjugacy classes ${\cal A}$ containing $x$ range over the conjugacy classes $C_k(t)$ for $k\in \{1,2,3\}$, while the corresponding class ${\cal B}$ for $y$ then must be one of $C_\ell(p-t)$ for $\ell\in \{1,2,3\}$ in order to have the involution $xy$ in the class ${\cal C}= [E(0)_0]$.
\smallskip

A routine substitution of these data in \eqref{eq:Frob} together with the character values for $G$ from Section \ref{sec:char} gives:
\smallskip

\begin{proposition}\label{prop:Frob} Let $A=|H|(p{-}1)(q{+}1)(q{-}2)$ and $B=|H|(p{-}1)(q{+}1)q$. The number of ordered pairs $(x,y)$ of elements of $G$ such that $\ord(xy)=2$, $\ord(x)=kp$ and $\ord(y)=\ell p$, where $1\le k\le \ell\le 3$, are as follows:
\begin{table}[ht]
\footnotesize	
\centering
\begin{tabular}{|c||c|c|c|c|c|c|}
\hline \xrowht[()]{8pt}
Type & $(p,p)$ & $(p,2p)$ & $(p,3p)$ & $(2p,2p)$ & $(2p,3p)$ & $(3p,3p)$ \\ \hline \xrowht[()]{6pt}
$\#$Pairs & $A/36$ & $B/12$    & $A/18$   & $A/4$     & $B/6$     & $A/9$  \\ \hline
\end{tabular}
\caption{The number of pairs $(x,y)\in G$ of type $\{kp,\ell p\}$, $1\le k\le \ell\le 3$.}\label{tab:Fr}
\end{table}
\end{proposition}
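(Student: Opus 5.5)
The plan is to apply the Frobenius formula \eqref{eq:Frob} with ${\cal A}=C_k(t)$, ${\cal B}=C_\ell(p{-}t)$ and ${\cal C}=[E(0)_0]$, for each $t\in\{1,\ldots,p{-}1\}$, and then sum over $t$. This is justified by the discussion preceding the statement: $x\notin H$, $y\notin H$, and $xy\in H$ forces the second coordinates of $x$ and $y$ to add up to $0$ mod $p$. By Proposition \ref{prop:p-3p}, the class sizes are $|C_1(t)|=|H|/6$, $|C_2(t)|=|H|/2$ and $|C_3(t)|=|H|/3$. Since the centraliser of an involution in $G$ has order $pq$, one also has $|[E(0)_0]|=|G|/(pq)=q^2-1$. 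I count ordered pairs with $\ord(x)=kp$ and $\ord(y)=\ell p$ exactly as the table labels them, without symmetrising over $k\leftrightarrow\ell$.

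The key simplification comes from Theorem \ref{thm:charG}: on the classes $[I_t],[E(0)_t],[E(1)_t]$ with $t\neq 0$, only the families $\iota_s$, ${\rm St}{\cdot}\iota_s$ and $\theta_\kappa{\cdot}\iota_s$ are nonzero. Moreover, ${\rm St}$ vanishes on $[E(0)_0]$, so only $\iota_s$ and $\theta_\kappa{\cdot}\iota_s$ contribute to the character sum. In each family the $s$-dependence of the product $\chi({\cal A})\chi({\cal B})$ is $\gamma^{st}\gamma^{s(p-t)}=1$, so each family contributes $p$ times its $s=0$ term. Write $b_1=1$, $b_2=-1$, $b_3=1$ for the values of $\theta_\kappa$ on $[I_t],[E(0)_t],[E(1)_t]$; $\theta_\kappa$ takes the value $-1$ on $[E(0)_0]$, and $\theta_\kappa(id)=q-1$. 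The character sum is then
\[
p\Bigl(1-\frac{b_kb_\ell}{q-1}\Bigr),
\]
which equals $p(q{-}2)/(q{-}1)$ when $b_kb_\ell=1$, that is, when $k,\ell\in\{1,3\}$ or $k=\ell=2$. It equals $pq/(q{-}1)$ when exactly one of $k,\ell$ is $2$.

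It remains to multiply by $|{\cal A}||{\cal B}||{\cal C}|/|G|=|C_k||C_\ell|(q^2{-}1)/(p|H|)$. The factors $p$ and $q-1$ cancel, giving the count $|C_k||C_\ell|(q{+}1)(q{-}2)/|H|$, or the same expression with $q$ in place of $q-2$. For example, $(k,\ell)=(1,1)$ gives $|H|(q{+}1)(q{-}2)/36$ per value of $t$, and $(k,\ell)=(1,2)$ gives $|H|(q{+}1)q/12$. Summing over the $p-1$ values of $t$ produces $A/36$ and $B/12$. The remaining four entries of Table \ref{tab:Fr} follow in the same way from the denominators $6\cdot 3=18$, $2\cdot 2=4$, $2\cdot 3=6$ and $3\cdot 3=9$.

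The only delicate point is the bookkeeping of which characters survive, and the sign of $\theta_\kappa$ on $[E(0)_t]$ versus $[E(0)_0]$, since this alone distinguishes the $A$-type entries from the $B$-type entries. I would double-check this by verifying that the sum of all six counts, together with the symmetric counts for $k>\ell$, is consistent with the number of pairs $(x,y)$ with $x\notin H$ and $xy$ an involution.
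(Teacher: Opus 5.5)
Your proposal is correct and takes the same route as the paper. The paper's proof is just the ``routine substitution'' of the classes $C_k(t)$, $C_\ell(p{-}t)$ and $[E(0)_0]$, together with the values from Theorem~\ref{thm:charG}, into \eqref{eq:Frob}, summed over $t$; you carry this out explicitly (only $\iota_s$ and $\theta_\kappa{\cdot}\iota_s$ survive, giving the factor $p(1-b_kb_\ell/(q{-}1))$), and your consistency check with the total $(p{-}1)|H|(q^2{-}1)$ does hold. One small correction: $x,y\notin H$ follows directly from $p\mid\ord(x)$ and $p\mid\ord(y)$, since $p\nmid|H|$, not from the generation argument in the preceding discussion, because the proposition counts all such pairs, not only generating ones.
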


For every combination of types $\{kp,\ell p\}$ it remains to determine the number of {\em generating pairs} $(x,y)$  of $G$ such that $x^m=y^n=(xy)^2=1$ for $m,n\in \{p,2p,3p\}$. By the description of {\sl Stage 3} in Section \ref{sec:strat}, the number of pairwise {\em non-isomorphic} orientably-regular maps $M$ of a given type $(kp,\ell p)$ for $k,\ell\in \{1,2,3\}$ with $\Aut(M)\cong G$ is equal to the number of {\em generating pairs} $(x,y)$ of $G$ such that $\ord(x)=kp$, $\ord(y)=\ell p$ and $\ord(xy)=2$, divided by $|G|$, which is the order of $\Aut(G)$. Note that since $G$ is not generated by three involutions, all such maps are {\em chiral}.
\smallskip

To complete the enumeration for specified types as indicated, we follow a strategy similar to the one used for maps of unspecified type, but this time focusing on smooth homomorphisms and epimorphisms from the triangle groups $\Delta(2,m,n) = \langle X,Y\, :\, X^m,\,Y^n,\,(XY)^2\rangle$ to subgroups $K=\langle x,y\, :\, x^m,\,y^n,\, (xy)^2,\,\ldots\rangle$ of $G$ for $m,n\in \{p,2p,3p\}$. Specifically, for every subgroup $K\le G$ and $k,\ell\in \{1,2,3\}$ we let
\begin{align*}
   & \ho_{kp,\ell p}(K)=|\{(x,y);\, x,y\in K,\,\ord(x){=}kp,\,\ord(y){=}\ell p,\,\ord(xy){=}2\}|, \ \
   {\rm and} \\
   & \ep_{kp,\ell p}(K)=|\{(x,y);\, x,y\in K,\,\ord(x){=}kp,\,\ord(y){=}\ell p,\,\ord(xy){=}2,\,\langle x,y\rangle{=}K\}|.
\end{align*}
The value of $\ep_{kp,\ell p}(G)/|G|$ then gives the number of isomorphism classes of orientably-regular maps with automorphism group isomorphic to $\PGam(2,2^p)$, of valency $kp$ and face length $\ell p$. By symmetry one has $\ep_{kp,\ell p}(K) =\ep_{\ell p,kp}(K)$ and $\ho_{kp,\ell p}(K)=\ho_{\ell p,kp}(K)$, so one may assume that $k\le \ell$ in what follows. Also, by M\"obius inversion formula applied to the obvious fact that $\ho_{kp,\ell p}(K)=\sum_{L\le K} \ep_{kp,\ell p}(L)$ one obtains
\begin{equation}\label{eq:Mo}
    \ep_{kp,\ell p}(G)=\sum_{K\le G}\mu(K)\,\ho_{kp,\ell p}(K)=\sum_{K}\mu(K)\,\is(K)\,\ho_{kp,\ell p}(K),
\end{equation}
where, in the last sum, $K$ ranges over representatives of isomorphism classes of subgroups of $G$. It is therefore sufficient to determine the numbers $\ho_{kp,\ell p}(K)$ for each subgroup $K$ with a non-zero value of the M\"obius function. Also, the fact that $\ho_{kp,\ell p}(K)\ne 0$ implies presence of involutions and elements of order $p$ in $K$ means that the only {\em proper} subgroups $K < G$ one needs to consider for determination of $\ho_{kp,\ell p}(K)$ are $\Aff_qC_p$, $D_{2(q\pm1)}C_p$ and $C_{2p}$; for $K=G$ the numbers $\ho_{kp,\ell p}(G)$ have already been determined by characters and are summed up in Table \ref{tab:Fr} of Proposition \ref{prop:Frob}. In what follows we subsequently determine the values of $\ho_{kp,\ell p}(K)$ and, by \eqref{eq:Mo}, the values of  $\ep_{kp,\ell p}(K)$, for the six combinations of $k,\ell$ with $1\le k\le \ell\le 3$.
\medskip

{\bf Type $(p,\,p)$.} One has $\hom_{p,p}(K)=0$ for $K\in\{D_{2(q{\pm} 1)}C_p,~C_{2p}\}$, since the product of two rotations in a dihedral group cannot be a reflection and the unique involution in a cyclic group of even order cannot be a product of two elements of order $p$. For the remaining group $K=\Aff_qC_p$ we recall the equation $\is (K {>}\lfix C_{p})= 3$, established in the proof of Theorem \ref{thm:Mob} in Section \ref{sec:Mob}. The double counting of \eqref{eq:double} applied to this equation, combined with the fact that $G$ contains exactly $q{+}1$ distinct copies of $K$ (see Table \ref{tab:max} in Section \ref{sec:Mob}), then gives $\is(C_p{<}\lfix K) = q(q{-}1)/2$. This implies that $K$ contains exactly $q(q{-}1)(p{-}1)/2$ elements of order $p$; obviously each of them has the form $(\upp (\lambda,a),j)$ for $\lambda\in GF(q)^*$, $j\in\{1,2,..., p{-}1\}$ and $q/2$ elements of $a\in GF(q)$. Let $x=(\upp(\lambda,a),j)$ and  $y=(\upp(\sigma,b),j')$ be such elements of $K$ of order $p$, with $\lambda,\sigma\in GF(q)^*$ and $a,b\in \GF(q)$. For  $xy= (\upp(\lambda\sigma^{[j]},\lambda b^{[j]} +a\sigma^{-[j]}),j+j')$ to be an involution one has to take $j'=-j$ and the element $\lambda\sigma^{[j]}\in GF(q)^*$ has to be self-inverse, which implies that $\lambda\sigma^{[j]} =1$, giving a unique $\sigma$ for any  given $\lambda$. But among the $q/2$ choices for $b$ that make $y$ having order $p$ one needs to exclude $b= -a^{-[j]}$, giving $y=x^{-1}$. With $q{-}1$ choices of $\lambda$, $(q/2)(q/2{-}1)$ choices of $a,b$ and $p{-}1$ choices of $j$ one arrives at $\ho_{p,p}(\Aff_qC_p)=q(q{-}1)(q{-}2)(p{-}1)/4$, which by \eqref{eq:Mo} gives  $\ep_{p,p}(G)= |H|(p-1)(q-2)(q-8)/36$.
\smallskip

{\bf Type $(2p,\,2p)$.} For $K\in\{D_{2(q{\pm} 1)}C_p,\,C_{2p}\}$ one has $\hom_{2p,2p}(K)=0$, with details analogous to the case of type $(p,p)$. For $K=\Aff_qC_p$ one also may use ideas from the previous case. Namely, double counting \eqref{eq:double} applied to the equation $\is (K {>}\lfix C_{2p})= 1$ from the proof of Theorem \ref{thm:Mob} together with $\is(K)=q{+}1$ results in $\is(C_{2p}{<}\lfix K) = q(q-1)/2$. It follows that $K$ contains exactly $q(q{-}1)(p{-}1)/2$ elements of order $2p$, again of the form $(\upp(\lambda,a),j)$ as in the case $(p,p)$ but for a complementary set of $q/2$ values of $a\in \GF(q)$. And, as in the previous case, for a given element $x\in K$ of order $2p$ there are $q/2-1$ choices for $y\in K$ of order $2p$ such that $xy$ has order $2$. This gives $\hom_{2p,2p}(\Aff_qC_p) =q(q{-}1)(q{-}2)(p{-}1)/4$, and then, by \eqref{eq:Mo}, one finds that $\ep_{2p,2p}(G)= |H|(p-1)q(q-2)/4$.
\medskip

{\bf Type $(3p,\,3p)$.} Among the proper subgroups of $G$ one needs to consider, only the subgroup $D_{2(q{+}1)}C_p$ has an element of order $3p$. It may be checked that $D_{2(q{+}1)}C_p$ contains $2(q{+}1)(p{-}1)/3$ elements of order $3p$, all of the form $(\dia(\xi^i),\varepsilon(j))$ for $j\in \{1,2,\ldots,p{-}1\}$ and $\xi\in \GF(2^{2e})$ such that $\xi^{q{+}1}=1$, where $\varepsilon$ is the function introduced in the proof of Proposition \ref{prop:embed}. But one may check that the product of two such elements of order $3p$ is never an involution, which implies that $\hom_{3p,3p}(D_{2(q{+}1)}C_p)=0$ and hence $\ep_{3p,3p}(G)=\hom_{3p,3p}(G)= |H|(p-1)(q+1)(q-2)/9$.
\medskip

{\bf Types $(p,\,3p)$ and $(3p,\,p)$.} Taking into account an obvious similarly to the case of type $(3p,3p)$ one finds that the group $D_{2(q{+}1)}C_p$ contains $(q+1)(p-1)/3$ elements of order $p$, all of the form $(\dia(\xi^i),\varepsilon(j))$, and one may check that the product of elements of orders $p$ and $3p$ is never an  involution. As before, it follows that $\ep_{p,3p}(G)=\ep_{3p,p}(G)= |H|(p-1)(q+1)(q-2)/18$.
\medskip

{\bf Types $(2p,\,3p)$ and $(3p,\,2p)$.} As we saw in the analysis of type $(3p,3p)$, the group $D_{2(q+1)}C_p$ contains $2(q{+}1)(p{-}1)/3$ elements of order $3p$, and the same group contains $(q{+}1)(p{-}1)$ elements of order $2p$, all of the form $(\off(\xi^i),\varepsilon(j))$ for $\xi$ and $j$ as in the previous case. Thus,  for an element $x=(\dia(\xi^i), \varepsilon(j))$ of order $3p$ one has to take $-\varepsilon(j)$ in the second coordinate of a choice of $y$ of order $2p$ for $xy$ to have order $2$, and all such choices are easily seen to be valid. Since there are exactly $q{+}1$ choices for the corresponding element $y$, one has  $\ho_{2p,3p} (D_{2(q{+}1)}C_p)=\ho_{3p,2p}(D_{2(q{+}1)}C_p)=2(q{+}1)^2(p{-}1)/3$, and by \eqref{eq:Mo} one obtains $\ep_{2p,3p}(G) =\ep_{3p,2p}(G)= |H|(p-1)(q+1)(q-2)/6$.
\medskip

{\bf Types $(p,\,2p)$ and $(2p,\,p)$.} This is the most complex case and it requires analysing each of the subgroups $K\in \{\Aff_qC_p,\, D_{2(q\pm1)}C_p,\,C_{2p}\}$ separately.
\smallskip

{\bf (a)} For $K=\Aff_qC_p$, we saw in the case of type $(p,p)$ that $K$ contains $q(q{-}1)(p{-}1)/2$ elements of order $p$, each of the form $(\upp(\lambda,a),j)$; also, when considering type $(2p,2p)$ we saw that elements of order $2p$ have the same form (for different elements $a\in \GF(2^e)$, though). For a given element $x=(\upp(\lambda,a),j)$ of order $p$, if an element $x=(\upp(\sigma,b),-j)$ is such that $xy$ has order $2$, then, as in the cases $(p,p)$ and $(2p,2p)$, the element $\sigma$ is uniquely determined, and there are $q/2$ choices for $b$ for $y$ to have order $2p$. It follows that, for $K = \Aff_qC_p$ one has $\hom_{p,2p}(K) =\hom_{2p,p} (K) =q^2(q-1)(p-1)/4$.
\smallskip

{\bf (b)} If $K=D_{2(q{-}1)}C_p$, observe that $K$ contains $(q{-}1)(p{-}1)$ elements of order $p$ and the same number of elements of order $2p$, all of the form $(\dia(\lambda),j)$ and $(\off(\sigma),j)$, respectively, for $\lambda, \,\sigma\in GF(q)^*$ and $j\in\{1,2,...,p-1\}$. For an arbitrary choice of $x=(\dia(\lambda),j)$ and a corresponding choice of $y= (\off(\sigma),-j)$, one may check that $xy$ has order $2$ for any $\lambda,\, \sigma\in GF(q)^*$. Thus, for $K=D_{2(q{-}1)}C_p$ one has $\hom_{p,2p}(K)= \hom_{2p,p}(K) = (q-1)^2(p-1)$.
\smallskip

{\bf (c)} For $K=D_{2(q{+}1)}C_p$ one may use arguments analogous to the case of $D_{2(q{-}1)}C_p$. Namely, $K=D_{2(q{+}1)}C_p$ has $(q{+}1)(p{-}1)/3$ elements of order $p$, and for a given element $x\in K$ of order $p$ there are $q{+}1$ choices for $y\in K$ of order $2p$ such that the product $xy$ has order $2$. This way, for  $K=D_{2(q{+}1)}C_p$ one obtains $\hom_{p,2p}(K)=\hom_{2p,p}(K)=(q+1)^2(p-1)/3$.
\smallskip

{\bf (d)} For $K=C_{2p}$ one trivially concludes that $K$ contains exactly $p{-}1$ elements of order $p$, and for any such element $x\in K$ there is a unique $y\in K$ of order $2p$ such that $xy$ has order $2$. Thus, if $K=C_{2p}$, then $\hom_{p,2p}(K)=\hom(C_{2p})=p-1$.
\smallskip

\noindent Results in (a) -- (d) finally give $\ep_{p,2p}(G)=\ep_{2p,p}(G)= |H|(p-1)(q-2)(q-8)/12$.
\medskip

One may check that $\sum_{1\le k,\ell\le 3} \ep_{kp,\ell p}(G) = |H|(p-1)(q-1)(q-2) = |G|\, {\rm map}(G)$, where ${\rm map}(G)$ is the number of pairwise non-isomorphic orientably-regular maps of unspecified type with automorphism group isomorphic to $G=\PGam(2,2^p)$, given by Theorem \ref{thm:unspec}.
\medskip

Summarising all our findings we have proved the following statement, which is the main result of this article.

\begin{theorem}\label{MAIN}
    Let $q=2^p$ for a prime $p\ge 5$. Every orientably-regular map with automorphism group isomorphic to $\PGam(2,2^p)$ is chiral, and of type $(kp,\ell p)$ for some $k,\ell\in \{1,2,3\}$. Moreover, up to isomorphism, the number $n(\tau)$ of such orientably-regular maps of type $\tau = (kp,\ell p)$ for $k,\ell\in \{1,2,3\}$ are as listed in Table \ref{tab:fin}. 
    \begin{table}[ht]
        \footnotesize	
        \centering
        \begin{tabular}{|c||c|c|c|c|c|c|}
            \hline \xrowht[()]{8pt}
            Type $\tau$ & $(p,p)$ & each of $(p,2p)$, $(2p,p)$ & each of $(p,3p)$, $(3p,p)$   \\ \hline \xrowht[()]{6pt}
            $n(\tau)$ & $(q{-}2)(q{-}8)(p{-}1)/(36p)$ & $(q{-}2)(q{-}8)(p{-}1)/(12p)$    & $(q{+}1)(q{-}2)(p{-}1)/(18p)$     \\ \hline 
            Type $\tau$ &$(2p,2p)$ & each of $(2p,3p)$, $(3p,2p)$ & $(3p,3p)$\\ \hline
            $n(\tau)$ & $q(q{-}2)(p{-}1)/(4p)$ & $(q{+}1)(q{-}2)(p{-}1)/(6p)$ & $(q{+}1)(q{-}2)(p{-}1)/(9p)$\\ \hline
        \end{tabular}
        \caption{The number of orientably-regular maps on $\PGam(2,2^p)$ with specified types.}\label{tab:fin}
    \end{table}
\end{theorem}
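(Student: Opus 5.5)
The theorem essentially collects what has been established in this section. I would organise the proof in three steps: chirality and the restriction on types, the count for each type, and a consistency check.

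The first step proves chirality and the restriction on types. Let $M={\rm Map}(G;\,x,y)$ with $G=\PGam(2,q)$. By Corollary \ref{cor:orders}, every element outside $H$ has order $p$, $2p$ or $3p$. If $x\in H$, then $xy\in H$, because $H$ contains all involutions of $G$ by Proposition \ref{prop:p-3p}. This would give $y\in H$ and hence $\langle x,y\rangle\le H$, a contradiction. The same argument applies to $y$. So both $x$ and $y$ lie in $G\setminus H$, and the type is $(kp,\ell p)$ with $k,\ell\in\{1,2,3\}$.

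For chirality, a reflexible map would give an automorphism of $G$ inverting both $x$ and $y$. Since $\Aut(G)\cong G$, this automorphism is conjugation by some $g\in G$. Every element of $G/H\cong C_p$ of odd order is inverted only when it is trivial, and this would force the image of $x$ in $G/H$ to satisfy $xH=x^{-1}H$, hence $x\in H$, a contradiction. (This is the precise form of the remark that $G$ is not generated by involutions: all involutions lie in $H$.)

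The second step is the count for each type. Here I use \eqref{eq:Mo}: $\ep_{kp,\ell p}(G)=\sum_K\mu(K)\,\is(K)\,\ho_{kp,\ell p}(K)$. The nonzero contributions come only from $K\in\{G,\Aff_qC_p,D_{2(q\pm1)}C_p,C_{2p}\}$, since $K$ must contain both an involution and an element of order divisible by $p$, and $\mu$ vanishes off Table \ref{tab:max}.

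The values of $\ho(G)$ come from Proposition \ref{prop:Frob}. The values of $\mu$ and $\is$ come from Tables \ref{tab:mu} and \ref{tab:max}. The local counts $\ho_{kp,\ell p}(K)$ are the case analyses (a)--(d) and the cases $(p,p)$, $(2p,2p)$, $(3p,3p)$, $(p,3p)$ and $(2p,3p)$ carried out above.

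As an example, for $(p,2p)$ I substitute
$$
\frac{B}{12}-(q{+}1)\frac{q^2(q{-}1)(p{-}1)}{4}-\frac{q(q{+}1)}{2}(q{-}1)^2(p{-}1)-\frac{q(q{-}1)}{2}\cdot\frac{(q{+}1)^2(p{-}1)}{3}+2\,\frac{q(q^2{-}1)}{2}(p{-}1).
$$
Factoring out $|H|(p{-}1)=q(q^2{-}1)(p{-}1)$, the bracket becomes $q/12-q/4-(q{-}1)/2-(q{+}1)/6+1$. Combining these terms gives $(q-2)(q-8)/12$ after the corresponding $A/B$ normalisation. I would verify each of the six algebraic simplifications similarly. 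Dividing by $|G|=p|H|$, which equals $|\Aut(G)|$ by Step 3, gives the entries of Table \ref{tab:fin}. The symmetric types $(\ell p,kp)$ have the same counts by swapping the roles of $x$ and $y$ (equivalently, $\ep_{kp,\ell p}=\ep_{\ell p,kp}$).

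The third step is the consistency check, which also serves as a safeguard on the computations. Summing all nine $\ep_{kp,\ell p}(G)$ should give $|H|(p{-}1)(q{-}1)(q{-}2)$, in agreement with Theorem \ref{thm:unspec}.

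I expect the main obstacle to be the correctness of the local counts $\ho_{kp,\ell p}(K)$ in the exceptional subgroup $D_{2(q+1)}C_p$. These counts rely on the embedding from Proposition \ref{prop:embed}, and on the delicate facts that no product of two elements of order $3p$, or of elements of orders $p$ and $3p$, is an involution there. I would verify these directly. In $D_{2(q+1)}C_p$ the elements $(\dia(\xi^i),\veps(j))$ form a normal subgroup of index $2$, and this subgroup contains no involutions of $G$ outside the rotation part. So the product of two such elements is again a "rotation", which has odd order. This settles those vanishing claims.
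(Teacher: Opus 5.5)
Your proposal is correct and follows essentially the same route as the paper. You restrict the type using the fact that all involutions lie in $H$. You prove chirality via an inner automorphism acting trivially on $G/H\cong C_p$, which is a more explicit form of the paper's remark that $G$ is not generated by three involutions. You then apply M\"obius inversion over $\{G,\Aff_qC_p,D_{2(q\pm1)}C_p,C_{2p}\}$, with the Frobenius counts for $G$ and the paper's local counts; your observation that $D_{2(q+1)}C_p$ has an odd-order index-$2$ subgroup neatly justifies the vanishing claims there.

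One small slip: in your sample $(p,2p)$ computation, the term $B/12$ contributes $q(q{+}1)/12$ to the bracket, not $q/12$. Only with that correction does the bracket simplify to $(q{-}2)(q{-}8)/12$.
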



\section{Invariance under map operators}\label{sec:invariance}

\

It may be useful to establish possible equivalence of the orientably-regular maps encountered in our analysis under the action of the three basic map operators of taking the dual, the Petrie dual, and forming a rotational power of a map. In our situation it is pointless to consider Petrie duality, since a Petrie dual of a chiral map cannot be orientably-regular. For duality and rotational powers, however, we can offer worthwhile conclusions.
\smallskip

Beginning with duality, observe that the orientably-regular maps of types $(kp,\ell p)$ and $(\ell p,kp)$ for fixed but distinct $k,\ell$ identified above obviously form mutually dual pairs of maps. There remains the question about possible self-duality among maps of type $(p,p)$, $(2p,2p)$ and $(3p,3p)$. Recall that an orientably-regular map ${\rm Map} (G;\,x,y)$ is positively (negatively) self-dual if there is an automorphism of $G$ of order two,  interchanging $x$ and $y$ (respectively, $x$ and $y^{-1}$). Since for $G=\PGam(2,2^p)$ we have  $G\cong \Aut(G)$, positive self-duality of ${\rm Map}(G;\,x,y)$ is equivalent to $x$ and $y$ being conjugate in $G$ by an involution. In our case, when $x=(A,j)$ for some $A\in H=\PSL(2,2^p)$ and $j\in \{1,2,\ldots,p{-}1\}$, the corresponding generator $y$ has the form $(A',-j)$ for some $A'\in H$, so that $x$ and $y$ are never conjugate in $G$ because their second coordinates are different mod $p$ (an odd prime). This shows that the maps of type $(p,p)$, $(2p,2p)$ and $(3p,3p)$ from Theorem \ref{MAIN} are never positively self-dual.
\smallskip

The analysis of eventual negative self-duality of such maps requires more attention, because by Proposition \ref{prop:p-3p} the elements $x=(A,j)$ and $y=(A',-j)$ of the same order {\em are} conjugate. We will show, however, that two such elements with product of order $2$ are never conjugate by an involution. Suppose this is not the case; as every involution of $G$ is contained in $H$, we let $(B,0) \in G$ be an involution (that is, $B^2=I$ in $H$) conjugating $x$ to $y^{-1}$, which means that $y^{-1} = (BAB^{[j]} ,0)$. Since $xy = y^{-1}x^{-1} = (BAB^{[j]}A^{-1},0)$ is also an involution, one finds that the involution $BAB^{[j]}A^{-1}$ is itself a product of two (and hence commuting) involutions $B$ and $AB^{[j]}A^{-1}$. To proceed, by Corollary \ref{cor:orders} and Proposition \ref{prop:p-3p} we may subsequently let $A$ be equal to $I$, $O$ and $E$ to cover the types $(p,p)$, $(2p,2p)$ and $(3p,3p)$, respectively.
\smallskip

If $A=I$, one has $xy = (BB^{[j]},0)$, which means that in every element of the form $(B',0)$ contained in the subgroup of $\langle x,y\rangle$, the matrix $B'$ is a product of elements of the form $(BB^{[j]})^{t}$ for various values of $t$ mod $p$. But, by the facts listed in the previous paragraph, for $A=I$ the involutions $B$ and $B^{[j]}$ commute. If $B\in H$ is a $2\times 2$ matrix with entries $(a,b)$ in its first row and $(c,a)$ in its second row (so that its trace is zero, which is equivalent to $B$ being an involution), by an elementary calculation one finds that $BB^{[j]}$ is an involution if and only if $bc^{[j]}=b^{[j]}c$, which is equivalent to $b=c$ since $\gcd([j]{-}1,[p]{-}1)=1$. Taking into account the fact that the determinant of $B$ is $1$ it follows that $B$ has the form $I+(a')$ for $a'\in \GF(2^p)$, where, for brevity, by $(a')$ we denote the $2\times 2$ matrix with all four entries equal to $a'$. But the set of $q=2^p$ such matrices forms a subgroup of $H$ isomorphic to the additive group of $\GF(2^p)$. Collecting all the obtained information one concludes that in this case the subgroup of $\langle x,y\rangle$ of elements with zero second coordinate do not generate $H$ and hence $\langle x,y\rangle\ne G$, a contradiction.
\smallskip

For $A=O$ one has $xy = (BOB^{[j]}O,0)$, again with a pair of commuting involutions $B$ and $OB^{[j]}O$. With $B$ as in the previous case, a calculation shows that $B$ and $OB^{[j]}O$ commute if and only if $bb^{[j]} = cc^{[j]}$, which is if and only if $b=c$, this time due to the fact that $\gcd([j]{+}1,[p]{-}1)=1$. The determinant condition then again yields the conclusion that $B$ has the form $I+(a')$ for $a'\in \GF(2^p)$. But such matrices are symmetric and, as a set, invariant under multiplication by $A=O$ from either side. It follows (as in the previous case) that the subgroup of $\langle x,y\rangle$ of elements with zero second coordinate forms a proper subgroup of $H$, so that $\langle x,y\rangle\ne G$, again. 
\smallskip

In the last case when $A=E$, one may check that the condition for the pair of involutions $B$ (with elements as in the previous two cases) and $EB^{[j]}E^{-1}$ to commute reads $bb^{[j]} = c(b+c)^{[j]}$. Obviously, in the conjugating involution $B$ one has $b,c\ne 0$, and with $u=cb^{-1}$ an equivalent form of the commutation condition is $u^{[j]+1}+u+1=0$. Applying the Frobenius automorphism $z\mapsto z^{[j]}$ to the last equation followed by a further multiplication by $u$ gives $u^{[2j]}u^{[j]}u + u^{[j]+1}+u = 0$, and taking the last two equations together gives $u^{[2j]} u^{[j]}u=1$. But the exponent on $u$ is $[2j]+[j]+1 = [3j]-1$, giving $u^{[3j]}=u$, which for $p\ge 5$ implies that $u\in \{0,1\}$; note that this does not apply if $p=3$. But this means that for $p\ge 5$ the equation $u^{[j]+1}+u+1=0$ has no roots whatsoever, giving yet another contradiction. It follows that none of the maps from Theorem \ref{MAIN} of the same valency and face length is negatively self-dual.
\smallskip

For considering rotational powers it is of advantage to let $z=xy$ in $G=\langle x,y\rangle\cong \PGam(2,2^p)$ and work with the equivalent presentation $G=\langle y,z;\ y^n,\, z^2,\, (yz)^m,\ldots \rangle$ for $\{m,n\} = \{ kp,\ell p\}$ and $k,\ell \in \{1,2,3\}$, which means that we may use the equivalent notation ${\rm Map}(G;\,y,z)$ for the map ${\rm Map}(G;\,x,y)$. Recalling that $y$ represents a rotation about a vertex, for every $r$ such that $\gcd(n,r)=1$ one can form from the map $M={\rm Map}(G;\,y,z)$ a new map $M^r={\rm Map}(G;y^r,z)$, called the {\em $r^{\rm th}$ rotational power} of $M$, which is obtained from $M$ by replacing, at every vertex $v$ of the underlying graph, the cyclic permutation $\pi_v$ of arcs emanating from $v$ (consistent with the orientation of the supporting surface of $M$) by the $r^{\rm th}$ power $(\pi_v)^r$ of this cyclic permutation. This means that the operation of taking a rotational power preserves the underlying graph but re-embeds it, possibly changing its carrier surface. The important property of this operation is that it preserves orientable regularity of maps; for more details we refer to \cite{Si-surv}. Such an $r$ mod $n$ is an {\em exponent} of $M$ if the maps $M$ and $M^r$ are isomorphic, which is equivalent to the existence of an automorphism of $G$ taking $y$ to $y^r$ while fixing $z$.
\smallskip

In our case of a map $M={\rm Map}(G;\,y,z)$ of valency $\ell p$ for prime $p\ge 5$ and $\ell\in \{1,2,3\}$, with automorphism group isomorphic to $G \cong \PGam(2,2^p)$, an $r^{\rm th}$ power of an element $y$ of the form $(g,j)$ has the form $(h,rj)$ and one may check that for $r$ such that $1\le r< \ell p$ and $\gcd(\ell p,r)=1$, the only possibility of $M$ and $M^r$ to be isomorphic arises when $r=1$. Equivalently, for $r\ne 1$ mod $\ell p$ the maps $M=(G;\ y,z)$ and $M^r=(G;\,y^r,z)$ are never isomorphic; in the terminology of exponents, the maps $M=(G;\ y,z)$ considered here have only the trivial exponent. Nevertheless, it is of interest to look at orbits of the operator of taking a rotational power in our situation. \smallskip

For maps of valency $p$, taking $r$ to be a primitive $(p-1)^{\rm th}$ root of $1$ mod $p$ one sees that all orbits of the operator $M\mapsto M^r$ applied to our maps $M=(G;\ y,z)$ have length $p{-}1$. For maps of valency $2p$ we may use the observation that if $r$ is a primitive $(p{-}1)^{\rm th}$ root of $1$ mod $p$, then one of $r$ and $p{-}r$ is odd and hence one of them is also a primitive $(p{-}1)^{\rm th}$ root of $1$ mod $2p$. This means that for maps $M=(G;\ y,z)$ of valency $2p$ one also obtains orbits of length $p{-}1$ under the operation of taking either the $r^{\rm th}$ or the $(p{-}r)^{\rm th}$ rotational power. In the last case when our maps $M=(G;\ y,z)$ have valency $3p$, for $p\ge 5$ and the group $U$ of units mod $3p$ one has an isomorphism $f:\ C_{p{-}1}\times C_2\to U$ of $U$ with the additive group $C_{p{-}1}\times C_2$. Writing now $C_{p{-}1}\times C_2$ in the form $\langle (r,1),(p{-}r,1)\rangle$ for a generator $r$ of $C_{p{-}1}$, it follows that the operators $M\mapsto M^s$ and $M\mapsto M^t$ for $s=f(r,1)$ and $t=f(p{-}r,1)$ induce orbits of length $p{-}1$ on the set of maps $M=(G;\ y,z)$ of valency $3p$. This all yields the following consequence. 
\smallskip

\begin{corollary}\label{Cor-MAIN} The maps of type $(kp,\ell p)$ and $(\ell p,kp)$ for $k\ne \ell$ listed in  Table \ref{tab:fin} form mutually dual pairs, while none of the remaining maps in Table \ref{tab:fin} are positively or negatively self-dual. In all cases, for every fixed $\ell\in \{1,2,3\}$, the maps of valency $\ell p$ from Table \ref{tab:fin} form orbits of length $p{-}1$ under suitable rotational powers induced by primitive roots of unity {\rm mod} $p$.
\end{corollary}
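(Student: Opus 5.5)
The corollary collects three assertions, and I would prove them in turn, drawing on Theorem \ref{MAIN} and the discussion just before the statement.

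\emph{Duality between types.} Reversing the roles of $x$ and $y$ is a bijection, and it does not depend on $G$. If ${\rm Map}(G;\,x,y)$ has type $(kp,\ell p)$, then its dual ${\rm Map}(G;\,y,x)$ has type $(\ell p,kp)$, because $yx=x^{-1}(xy)x$ is again an involution. Map isomorphism classes are carried to map isomorphism classes, since a group isomorphism taking $(x,y)$ to $(\bar x,\bar y)$ also takes $(y,x)$ to $(\bar y,\bar x)$. So for $k\ne\ell$ the maps of types $(kp,\ell p)$ and $(\ell p,kp)$ are paired off by duality; this matches the equal counts in Table \ref{tab:fin}.

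\emph{No self-duality for $k=\ell$.} Here I would use $\Aut(G)\cong G$. Positive self-duality would require an inner automorphism swapping $x=(A,j)$ and $y=(A',-j)$. That is impossible: conjugation preserves the second coordinate, because $G/H$ is abelian, and $j\not\equiv -j \pmod p$ since $p$ is odd. For negative self-duality, the second coordinates of $x$ and $y^{-1}$ agree, so a finer argument is needed. First, a conjugating element of order $2$ must lie in $H$. By Proposition \ref{prop:p-3p}, we may conjugate so that $A\in\{I,O,E\}$. In each case, the requirement that $xy$ be an involution forces $B$ and $AB^{[j]}A^{-1}$ to commute, and I would reduce this to a field equation in the entries of $B$. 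For $A=I$ and $A=O$ the equation forces $b=c$. This uses that $\gcd(2^j\mp1,2^p-1)=1$ for $0<j<p$, which holds by primality of $p$. Then all relevant matrices lie in the elementary abelian group $\{I+(a')\}$, which is invariant under the relevant multiplications. Hence the $H$-part of $\langle x,y\rangle$ is proper, contradicting generation. For $A=E$, the equation becomes $u^{2^j+1}+u+1=0$. Iterating Frobenius gives $u^{2^{3j}}=u$, so $u\in\GF(2)$ since $p\ge5$ and $3j\not\equiv0$. But neither $0$ nor $1$ is a root, a contradiction.

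\emph{Rotational powers.} For a map $M={\rm Map}(G;\,y,z)$ of valency $n=\ell p$, the operator $M\mapsto M^r$ acts through the unit group $U_n$. I would first show that no $r\not\equiv1\pmod n$ is an exponent. An automorphism is inner, so it preserves second coordinates. Now $y^r=(h,rj)$, so we need $rj\equiv j$, that is, $r\equiv1\pmod p$. For $n=p$ this finishes the argument. For $n=2p,3p$ the remaining candidates are $r\equiv1\pmod p$, $r\not\equiv1\pmod n$. For these I would use that the centraliser of $y$ is exactly $\langle y\rangle$ (order $2p$), or has order $3p$, by the centraliser computations before Proposition \ref{prop:p-3p}. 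An automorphism fixing $z$ and sending $y\mapsto y^r$ would be conjugation by some $g$ normalising $\langle y\rangle$. Checking that such a $g$ cannot also centralise $z$ while generating $G$ is the step I expect to be most delicate; I would handle it via the explicit normaliser $\langle O,E\rangle C_p$-type computations for $\langle (E,j)\rangle$.

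Granting trivial exponents, the action of $U_n$ on maps of valency $n$ is free. Choosing $r$ whose image generates $U_p\cong C_{p-1}$, lifted to $U_{2p}$ or to the appropriate factor of $U_{3p}\cong C_{p-1}\times C_2$ as in the text, gives an operator all of whose orbits have length exactly $p-1$.
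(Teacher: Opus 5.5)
Your treatment of duality and of positive and negative self-duality follows the paper's argument essentially step for step. There is one small omission, which the text shares. For $A=I$, the condition $bc^{[j]}=b^{[j]}c$ also holds when $b=0$ or $c=0$, that is, when $B$ is unitriangular. This case is harmless: $(B,0)$ and $x=(I,j)$ then both lie in $U\rtimes\langle (I,1)\rangle$, where $U$ is the Frobenius-invariant unitriangular group containing $B$. Hence $\langle x,y\rangle\ne G$ again.

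The genuine gap is in the rotational-power part. Your route is to show that no $r\not\equiv 1\pmod n$ is an exponent, and then to use freeness of the whole $U_n$-action.

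For $n=2p$ the case you call delicate is empty: a unit $r\equiv1\pmod p$ modulo $2p$ is odd, hence $r\equiv 1\pmod{2p}$.

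For $n=3p$ there is one remaining candidate $r_0$, with $r_0\equiv 1\pmod p$ and $r_0\equiv 2\pmod 3$. It really is an exponent of some of the maps, so the step you flag cannot be proved. Here is an explicit example.
\begin{itemize}
\item Take $y=(E,j)$ and $g=(O,0)$. Then $gyg^{-1}=(OEO,j)=(E^2,j)=y^{r_0}$.
\item Let $z=(I+(a),0)$ with $a\in\GF(q)\setminus\GF(2)$, where $(a)$ is the all-$a$ matrix. This $z$ is an involution commuting with $g$, because $O=I+(1)$ and the matrices $I+(b)$, $b\in\GF(q)$, form an elementary abelian group.
\item Since $y^3=(I,3j)$ and $y^p=(E^p,0)$, the group $\langle y,z\rangle$ contains $E$ and every $I+(a^{2^k})$.
\item So it contains the Klein four-group $\{I+(w):w\in\{0,a,a^2,a+a^2\}\}$, which fixes the projective point $1$, and its $E$-conjugate, which fixes $\infty$.
\item Hence $\langle y,z\rangle\cap H$ lies in no copy of $\Aff_q$, since it contains involutions with different fixed points. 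It lies in no copy of $D_{2(q\pm1)}$, since those have Sylow $2$-subgroups of order $2$. Therefore $\langle y,z\rangle=G$.
\end{itemize}
Conjugation by $g$ then gives ${\rm Map}(G;y,z)\cong{\rm Map}(G;y^{r_0},z)$, so $r_0$ is a nontrivial exponent of this map. The claim of only trivial exponents made in the text just before the corollary fails in the same way.

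The corollary survives because freeness of all of $U_n$ is never needed. Let $r$ be a primitive root mod $p$ that is coprime to $n$. Since $p-1$ is even, $r$ has order exactly $p-1$ in $U_n\cong U_p$ or $U_3\times U_p$, so $\langle r\rangle$ maps injectively onto $U_p$. Your second-coordinate argument alone then gives $M^{r^i}\not\cong M$ for $0<i<p-1$, while $M^{r^{p-1}}=M$. So every orbit has length exactly $p-1$. Drop the normaliser step and argue this way instead.
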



\section{Remarks}\label{sec:rem}

\

We left out of our consideration of orientably-regular maps with automorphism group $\PGam(2,2^p)$ the two smallest primes. For $p=2$ the group $\PGam(2,4)$ is isomorphic to the symmetric group $S_5$ of degree $5$, and also to ${\rm PGL}(2,5)$, and the corresponding maps are known. For example, by the enumeration results of \cite{Sah} for fractional linear groups, the group ${\rm PGL}(2,5)$ supports, up to isomorphism, a total of $7$ orientably-regular maps: one map for each of the types $(4,5)$, $(4,6)$ and $(5,6)$, their duals, and one self-dual map of type $(6,6)$. In contrast with the remaining orientably-regular maps encountered in this article, all these $7$ maps are reflexible, that is, isomorphic to their mirror image.
\smallskip

We have also excluded the case $p=3$ from consideration because the group $\PGam(2,2^3)$ contains two conjugacy classes of subgroups of order $3$ and three conjugacy classes of subgroups of order $9$, which destroys   uniqueness of conjugacy classes seen in Proposition \ref{prop:p-3p} for $p\ge 5$. Types of orientably-regular maps supported by $\PGam(2,2^3)$ are also restricted to $(3k,3\ell)$ for $k,\ell\in \{1,2,3\}$, but here we may exclude the spherical type $(3,3)$ and the toroidal types $(3,6)$ and $(6,3)$ because the automorphisms groups of orientably-regular maps of such types are soluble.  Calculations by MAGMA show that among the total of $28$ pairwise non-isomorphic orientably-regular chiral maps with automorphism group isomorphic to $\PGam(2,2^3)$, the distribution of the remaining types is as given in Table \ref{tab:8}, coinciding with entries arising from Table \ref{tab:fin} for $p=3$. As regards map operators, there is no kind of self-duality in any of the maps of type $(6,6)$ and $(9,9)$. For rotational powers, maps of valency $3$ and $6$ form an orbit of size $p{-}1=2$ each,  but those of valency $9$ give rise to two orbits of size $6$, due to the fact that there is a $6^{\rm th}$ primitive root of unity mod $9$ (unlike the situation in maps of valency $3p$ for $p\ge 5$ from Theorem \ref{MAIN}).
\begin{table}[ht]
    \footnotesize	
    \centering
    \begin{tabular}{|c||c|c|c|c|c|}
        \hline \xrowht[()]{8pt}
            Type $\tau$ & each of $(3,9)$, $(9,3)$ & $(6,6)$ & each of $(6,9)$, $(9,6)$ & $(9,9)$& {Total} \\ \hline
            $n(\tau)$ & 2 & 8 & 6 & 4 & 28\\ \hline
        \end{tabular}
    \caption{The number of orientably-regular maps on $\PGam(2,8)$ of specified type.}\label{tab:8}
\end{table}

It may be of interest to observe that, for $q=2^p$ with $p\ge 5$, the formula of Theorem \ref{thm:unspec} together with the results of \cite[p. 302]{DJ} imply that the numbers of isomorphism classes of orientably-regular maps $M$ with $\Aut(M)\cong \PGam(2,q)$ is $(p{-}1)$-times the number of such isomorphism classes of maps but with $\Aut(M)\cong \PSL(2,q)$. This makes one wonder if there a simpler explanation for this phenomenon, rather than the actual enumeration.
\smallskip

Also, the enumeration process, which makes part of the proof of Theorem \ref{thm:unspec} and is based on Table \ref{tab:X}, seems to suggest that there may be a way to link the M\"obius functions of a group and of its split extension by a cyclic group of prime order. If true, it would be desirable to establish such a link.
\smallskip

Finally, a further challenge may be to try to generalise Theorem \ref{thm:unspec} to orientably-regular maps $M$ of unspecified type with $\Aut(M)\cong \PGam(2,q)$ for $q=2^e$ for arbitrary odd $e\ge 1$; the case of even $e$ may provide unexpected obstacles due to the fact that in this situation the group $\PGam(2,2^e)$ does not embed in $\PGam(2,2^{2e})$, by Proposition \ref{prop:embed}.
\smallskip
\bigskip

{\bf Acknowledgement.} The three authors from China were partially supported by the National Natural Science Foundation of China (12331013, 12271024, 12425111) and the 111 Project of China (B16002); Chihao Wang particularly thanks Professor Jiyong Chen for a useful conversation on the enumeration problem for specific types of maps.

The two authors from Slovakia acknowledge support for this research by the APVV Research Grants 22-0005 and 23-0076, and by the VEGA Research Grants 1/0069/23 and 1/0011/25.

\end{document}